\newtheorem{theorem}{Theorem}[section]
\newtheorem{corollary}[theorem]{Corollary}
\newtheorem{lemma}[theorem]{Lemma}
\newtheorem{proposition}[theorem]{Proposition}
\newtheorem{remark}[theorem]{Remark}
\theoremstyle{definition}
\numberwithin{equation}{section}
\begin{document}

\title{Restrictions of Laplacian eigenfunctions to edges in the Sierpinski gasket}

\author [Hua Qiu and Haoran Tian]{Hua Qiu and Haoran Tian}

\address{Department of Mathematics, Nanjing University,
Nanjing, Jiangsu, 210093, P.R. China} \email{huaqiu@nju.edu.cn}

\address{Department of Mathematics, Nanjing University,
Nanjing, Jiangsu, 210093, P.R. China} \email{hrtian@hotmail.com}

\subjclass[2000]{Primary: 28A80}

\keywords {Sierpinski gasket, harmonic functions, fractal Laplacian, eigenfunctions, restriction to edges, self-similar sets}

\thanks{The research of the first author was supported by the National Science Foundation of China, Grant 11471157.}

\begin{abstract}
In this paper, we study the restrictions of both the harmonic functions and the eigenfunctions of the symmetric Laplacian to edges of pre-gaskets contained in the Sierpinski gasket. For a harmonic function, its restriction to any edge is either monotone or having a single extremum. For an eigenfunction, it may have several local extrema along edges. We prove general criteria, involving the values of any given function at the endpoints and midpoint of any edge, to determine which case it should be, as well as the asymptotic behavior of the restriction near the endpoints. Moreover, for eigenfunctions, we use spectral decimation to calculate the exact number of the local extrema along any edge. This confirm, in a more general situation, a conjecture of K. Dalrymple, R.S. Strichartz and J.P. Vinson \cite{DSV} on the behavior of the restrictions to edges of the basis Dirichlet eigenfunctions, suggested by the numerical data. \end{abstract}

\maketitle

%%%%%%%%%%%%%%%%%%%%%%%%%%%%%%%%%%%%%%%%%%%%%%%%%%%%%%%%%%%%%%%%%%%%%%%%%%%%%%%%%%%%%%%%%%%%%%%%%%%%%%%%%%%%%%%%%%%%%%%%%%%%%%%%%%%%
%%%%%%%%%%%%%%%%%%%%%%%%%%%%%%%%%%%%%%%%%%%%%%%%%%%%%%%        Introduction      %%%%%%%%%%%%%%%%%%%%%%%%%%%%%%%%%%%%%%%%%%%%%%%%%%%
%%%%%%%%%%%%%%%%%%%%%%%%%%%%%%%%%%%%%%%%%%%%%%%%%%%%%%%%%%%%%%%%%%%%%%%%%%%%%%%%%%%%%%%%%%%%%%%%%%%%%%%%%%%%%%%%%%%%%%%%%%%%%%%%%%%%
\section{Introduction}

The study of analysis on  \textit{post critically finite (p.c.f.) self-similar sets} has been extensively developed since Kigami's analytic construction of the fractal Laplacian on the \textit{Sierpinski gasket} \cite{K1,K2}. Such a Laplacian  has also been obtained independently by Goldstein \cite{G} and Kusuoka \cite{Ku} using an indirect probabilistic approach. The harmonic functions and Laplacian eigenfunctions play important role in the analysis on the Sierpinski gasket, as the Laplacian $\Delta$ is the fundamental differential operator on which the analysis is based. 
Harmonic functions are the solutions of $\Delta f=0$, and Laplacian eigenfunctions are non-zero functions satisfying $-\Delta f=\lambda f$ for some eigenvalue $\lambda$. Fukushima and Shima \cite{FS, Sh} studied the spectrum of the Dirichlet Laplacian, whose eigenfunctions vanish on the boundary, and determined explicitly all the eigenvalues and eigenfunctions by using a \textit{spectral decimation} method introduced by physicists Rammal and Toulouse \cite{RT}. See \cite{BK, K3, KL, T} and the reference therein for related works on the eigenvalues and eigenfunctions. In this paper, we consider the local structure of these special functions on the Sierpinski gasket. More precisely, for any edge of the pre-gaskets of the Sierpinski gasket, we are interested in the local behavior of the restrictions of these special functions. 

Recall the Sierpinski gasket, denoted by $\mathcal{SG}$, is the attractor of the \textit{iterated function system (i.f.s.)} in $\mathbb{R}^2$ consisting of three contractive mappings $F_0, F_1, F_2$ with contraction ratio $\frac{1}{2}$ and fixed points $q_0, q_1, q_2$ which are the three vertices of an equilateral triangle. So
\begin{equation*}
\mathcal{SG}=\bigcup_{i=0}^2 F_i\mathcal{SG},
\end{equation*}
see Fig. 1. We denote by $V_0=\{q_0,q_1,q_2\}$ the \textit{boundary} of $\mathcal{SG}$ and $V_m=\bigcup_{i=0}^{2}F_i V_{m-1}$ inductively. Write $V_*=\bigcup_{m\geq 0}V_m$. For a \textit{word} $w=w_1\cdots w_m$ of \textit{length} $|w|=m$ with $w_i\in\{0,1,2\}$, let $F_w=F_{w_1}\circ\cdots\circ F_{w_m}$, and call $F_w\mathcal{SG}$ a \textit{$m$-cell} of $\mathcal{SG}$. Obviously, $V_m=\bigcup_{|w|=m} F_wV_0$. We approximate $\mathcal{SG}$ from within by a sequence of \textit{graphs} $\Gamma_m=(V_m, \sim_m)$ with vertices $V_m$ and edge relation $\sim_m$ defined as $x\sim_m y$ if and only if there is a word $w$ of length $m$ such that $x\neq y$ and $x,y\in F_w V_0$.

\begin{figure}[h]
\begin{center}
\includegraphics[width=6.7cm]{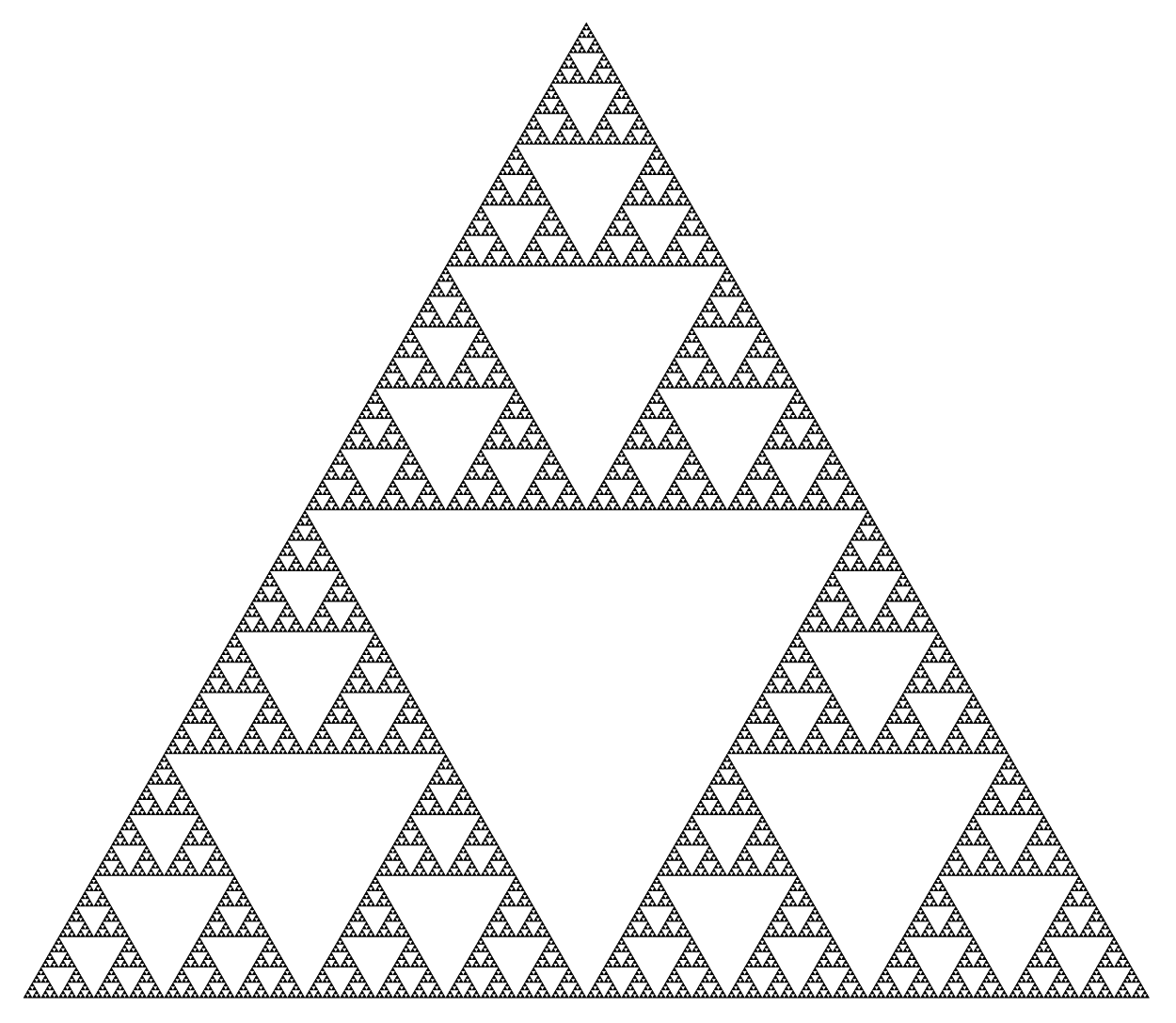}
\begin{center}
\begin{picture}(0,0) \thicklines
%\put(-14.6,173.4){$q_0$}
%\put(33.6,11.4){$q_2$}
%\put(-105.6,46.4){$q_3$}
%\put(95.6,72.4){$q_1$}
%\put(64.6,5.4){$(b)$}
\end{picture}

\textbf{Figure 1. The Sierpinski gasket}
\end{center}\end{center}
\end{figure}

The \textit{(symmetric) Laplacian} $\Delta$ on $\mathcal{SG}$ is the renormalized limit of \textit{graph Laplacians} $\Delta_m$ on $\Gamma_m$ defined as:
\begin{equation*}
\Delta_m u(x)=\sum_{y\sim_m x} \big(u(y)-u(x)\big) \text{ for } x\in V_m\setminus V_0,
\end{equation*}
and
\begin{equation}\label{eq1}
\Delta u(x)=\frac{3}{2}\lim_{m\rightarrow\infty}5^m\Delta_m u(x)
\end{equation}
with $u\in dom\Delta$ if and only if the right side of (\ref{eq1}) converges uniformly on $V_*\setminus V_0$ to a continuous function. 

A function $h$ is called \textit{harmonic} if it minimize the graph energy $\sum_{x\sim_m y}\big(h(x)-h(y)\big)^2$ on $\Gamma_m$ from each level to its next level. The space of harmonic functions on $\mathcal{SG}$ is $3$-dimensional and the values at vertices in $V_0$ can be freely assigned. The harmonic functions are also graph harmonic, i.e., they are the solutions of $\Delta_m h=0$ for each $m\geq 1$. So it is elementary to calculate the values of $h$ on $V_1$ from those on $V_0$ and recursively to obtain the values on $V_m$ for any $m$. There is an explicit \textit{extension algorithm}, the ``$\frac{1}{5}-\frac{2}{5}$'' rule, for computing the values of harmonic functions at all vertices in $V_*$ in terms of the boundary values. That is for any cell in $\mathcal{SG}$ whose boundary vertices are denoted by $p_0,p_1,p_2$, we have $h(p_{ij})=\frac{2}{5}h(p_i)+\frac{2}{5}h(p_j)+\frac{1}{5}h(p_k)$, where  $p_{ij}$ is the midpoint of the edge joining $p_i$ and $p_j$, for distinct $i,j,k\in\{0,1,2\}$.

Similar to that of the harmonic functions, for an \textit{eigenfunction} $u$ satisfying $-\Delta u=\lambda u$, there is a minimal integer $m_0$, called the \textit{level of birth}, such that  for $m\geq m_0$, the restriction of $u$ to $V_m$ is also an eigenfunction of the graph Laplacian associated with an eigenvalue $\lambda_m$, i.e., $-\Delta_m u=\lambda_m u$, satisfying that
\begin{equation}\label{eq2}
\lambda_{m}=\lambda_{m+1}(5-\lambda_{m+1})
\end{equation}
and 
\begin{equation}\label{eq3}
\lambda=\frac{3}{2}\lim_{m\rightarrow\infty}5^m\lambda_m.
\end{equation}
The identity (\ref{eq2}) implies that 
\begin{equation*}
\lambda_{m+1}=\frac{5+\varepsilon_{m+1}\sqrt{25-4\lambda_{m}}}{2} \quad\text{ for } \varepsilon_{m+1}=\pm 1, m\geq m_0.
\end{equation*}
Call $\{\varepsilon_{m}\}_{m>m_0}$ the \textit{$\varepsilon$-sequence} of $u$ and $\lambda$. Note that to ensure the limit in (\ref{eq3}) exists, $\varepsilon_m$ is only permitted to be positive $1$ for finitely many values of $m$. So there is an minimal integer $m_1> m_0$, called the \textit{level of fixation} such that for each $m\geq m_1$, $\varepsilon_{m}=-1$. There is a \textit{local extension algorithm} similar to that of the harmonic functions, which extends a graph eigenfunction $u$ on $V_m$ associated with $\lambda_m$ to a graph eigenfunction on $V_{m+1}$ associated with $\lambda_{m+1}$, provided that $\lambda_{m+1}\neq 2,5$. For any $m$-cell whose boundary vertices are $p_0, p_1, p_2$, we have
\begin{equation}\label{eq4}
u(p_{ij})=\frac{(4-\lambda_{m+1})\big(u(p_i)+u(p_j)\big)+2u(p_k)}{(2-\lambda_{m+1})(5-\lambda_{m+1})}
\end{equation}
with $p_{ij}$ being the vertex in $V_{m+1}$ in that cell, opposite to $p_k$, for distinct $i,j,k\in\{0,1,2\}$.

We summarize this spectral decimation property into the following proposition, see also Lemma 3.2.1 in \cite{St6}. 
\begin{proposition}
Suppose $\lambda_{m+1}\neq 2,5,6$ and $\lambda_m$ is given by (\ref{eq2}). If $u|_{V_m}$ is a $\lambda_m$-eigenfunction of $-\Delta_m$ and is extended to $V_{m+1}$ by (\ref{eq4}), then $u|_{V_{m+1}}$ is a $\lambda_{m+1}$-eigenfunction of $-\Delta_{m+1}$. Conversely, if $u|_{V_{m+1}}$ is a $\lambda_{m+1}$-eigenfunction of $-\Delta_{m+1}$, then $u|_{V_m}$ is a $\lambda_m$-eigenfunction of $-\Delta_m$.
\end{proposition}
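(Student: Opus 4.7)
The plan is a direct verification. Fix any $m$-cell with boundary vertices $p_0,p_1,p_2$ and corresponding midpoints $p_{ij}$, and set $\lambda':=\lambda_{m+1}$. Everything is driven by two elementary algebraic identities implied by (\ref{eq2}): $4-\lambda_m=(1-\lambda')(4-\lambda')$ and $(2-\lambda')(5-\lambda')-4=(\lambda'-1)(\lambda'-6)$.

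For the forward direction, I assume $-\Delta_m u=\lambda_m u$ on $V_m\setminus V_0$, define $u$ on $V_{m+1}\setminus V_m$ by (\ref{eq4}), and verify $-\Delta_{m+1}u=\lambda' u$ in two cases. \emph{New vertices:} a midpoint $p_{ij}$ has its four $\Gamma_{m+1}$-neighbors $p_i,p_j,p_{ik},p_{jk}$ all inside the same $m$-cell, so substituting (\ref{eq4}) for $u(p_{ij}),u(p_{ik}),u(p_{jk})$ and simplifying the numerator against the common factor $(2-\lambda')(5-\lambda')$ gives $\Delta_{m+1}u(p_{ij})=-\lambda' u(p_{ij})$; this step uses no hypothesis on $u|_{V_m}$, being the polynomial identity that (\ref{eq4}) is built to satisfy. \emph{Old vertices:} for $x\in V_m\setminus V_0$, $x$ lies in exactly two $m$-cells, and its four $\Gamma_{m+1}$-neighbors are the midpoints of the two edges through $x$ in each of those cells. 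Summing (\ref{eq4}) over both cells yields
\begin{equation*}
\sum_{y\sim_{m+1}x}u(y)=\frac{4(4-\lambda')u(x)+(6-\lambda')\sum_{y\sim_m x}u(y)}{(2-\lambda')(5-\lambda')},
\end{equation*}
and replacing $\sum_{y\sim_m x}u(y)$ by $(4-\lambda_m)u(x)$ (the hypothesis) and applying the first identity collapses the right-hand side to $(4-\lambda')u(x)$, as required.

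For the converse, assume $-\Delta_{m+1}u=\lambda' u$ on $V_{m+1}\setminus V_0$. The key first step is to show that $u$ necessarily satisfies (\ref{eq4}) on each $m$-cell: at the three midpoints of a single $m$-cell, the eigenvalue equations form a $3\times 3$ linear system in $u(p_{01}),u(p_{02}),u(p_{12})$ with coefficient matrix $(4-\lambda')I-A$, where $A$ is the $3\times 3$ matrix with zeros on the diagonal and ones off. Its eigenvalues are $2-\lambda'$ and $5-\lambda'$ (double), so the system is uniquely solvable precisely when $\lambda'\neq 2,5$, and the unique solution is exactly (\ref{eq4}). Once (\ref{eq4}) holds, the old-vertex computation from the forward direction applies, now combined with the given equation $\sum_{y\sim_{m+1}x}u(y)=(4-\lambda')u(x)$; solving for $\sum_{y\sim_m x}u(y)$ requires dividing by $6-\lambda'$ (hence the exclusion $\lambda'\neq 6$), and the second identity rewrites the result as $(4-\lambda_m)u(x)$, giving $-\Delta_m u(x)=\lambda_m u(x)$ as desired.

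The only real obstacle is careful bookkeeping of the neighbor sums in $\Gamma_m$ versus $\Gamma_{m+1}$ at junction vertices, where one must correctly account for contributions from both adjacent $m$-cells. All three excluded eigenvalues then receive a clean interpretation: $\lambda_{m+1}=2,5$ are the singular points of the extension formula (\ref{eq4}) itself, while $\lambda_{m+1}=6$ is precisely the value at which the inversion step in the converse direction breaks down.
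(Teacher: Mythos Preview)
The paper does not supply its own proof of this proposition; it records the statement as background, citing Fukushima--Shima \cite{FS,Sh} and Lemma~3.2.1 of Strichartz \cite{St6}. Your direct verification is correct and is essentially the standard argument found in those references: the new-vertex check is the polynomial identity underlying the definition of (\ref{eq4}), the old-vertex check is the two-cell neighbor sum you wrote down, and your identification of why each of the forbidden values $\lambda_{m+1}=2,5,6$ is excluded is accurate.
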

 This property was proved by Fukushima and Shima \cite{FS, Sh} mathematically. It is not true for all p.c.f. self-similar sets, even for other non-symmetric Laplacians on $\mathcal{SG}$. See \cite{Sh2} for the extension of this property to some fully symmetric fractals. 
 
 For the Dirichlet eigenfunctions, it is proved that we can classify them into three families, which we called \textit{$2$-series}, \textit{$5$-series} and \textit{$6$-series} eigenfunctions, depending on the initial eigenvalue $\lambda_{m_0}=2,5$ or $6$. The  $2$-series eigenfunctions all have $m_0=1$, the $5$-series eigenfunctions have $m_0\geq 1$, and the 
$6$-series eigenfunctions have $m_0\geq 2$ together with $\lambda_{m_0+1}=3$ ($\varepsilon_{m_0+1}=1$). The $2$-series eigenvalues all have multiplicity $1$, while the eigenvalues in the other series all exhibit higher multiplicity. The case for the Neumann eigenfunctions is very similar. See Section 3.3 in \cite{St6} for a complete description of the Dirichlet (Neumann) spectrum for the $\mathcal{SG}$ and a basis (not orthonormal) for all Dirichlet (Neumann) eigenfunctions.  Here we only mention that all such basis functions are formed from gluing of scaled and rotated copies of functions as shown in Fig. 2.

\begin{figure}[h]
\begin{center}
\includegraphics[width=4.5cm]{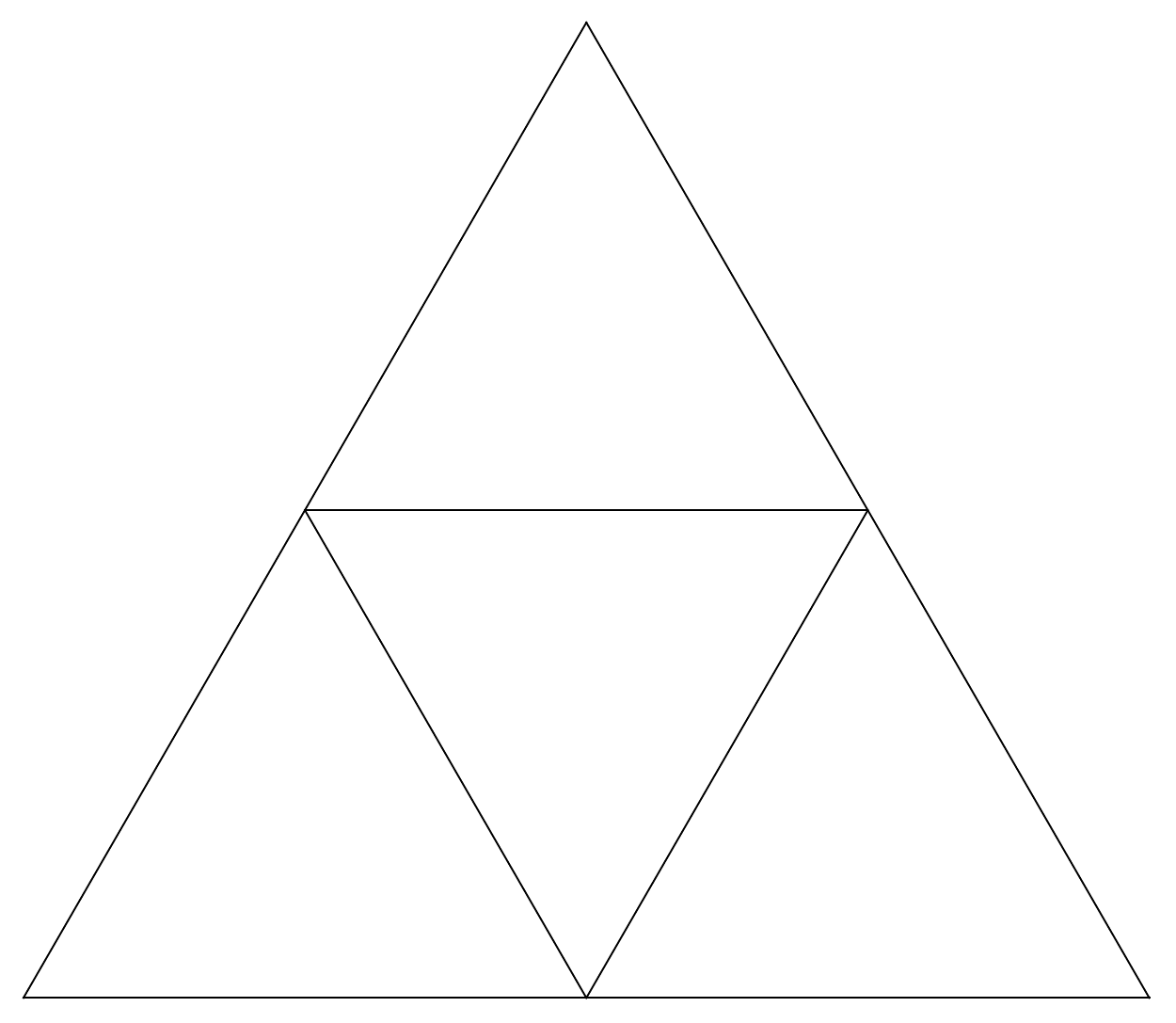}\hspace{0.3cm}
\includegraphics[width=4.5cm]{graph1.pdf}\hspace{0.3cm}
\includegraphics[width=4.5cm]{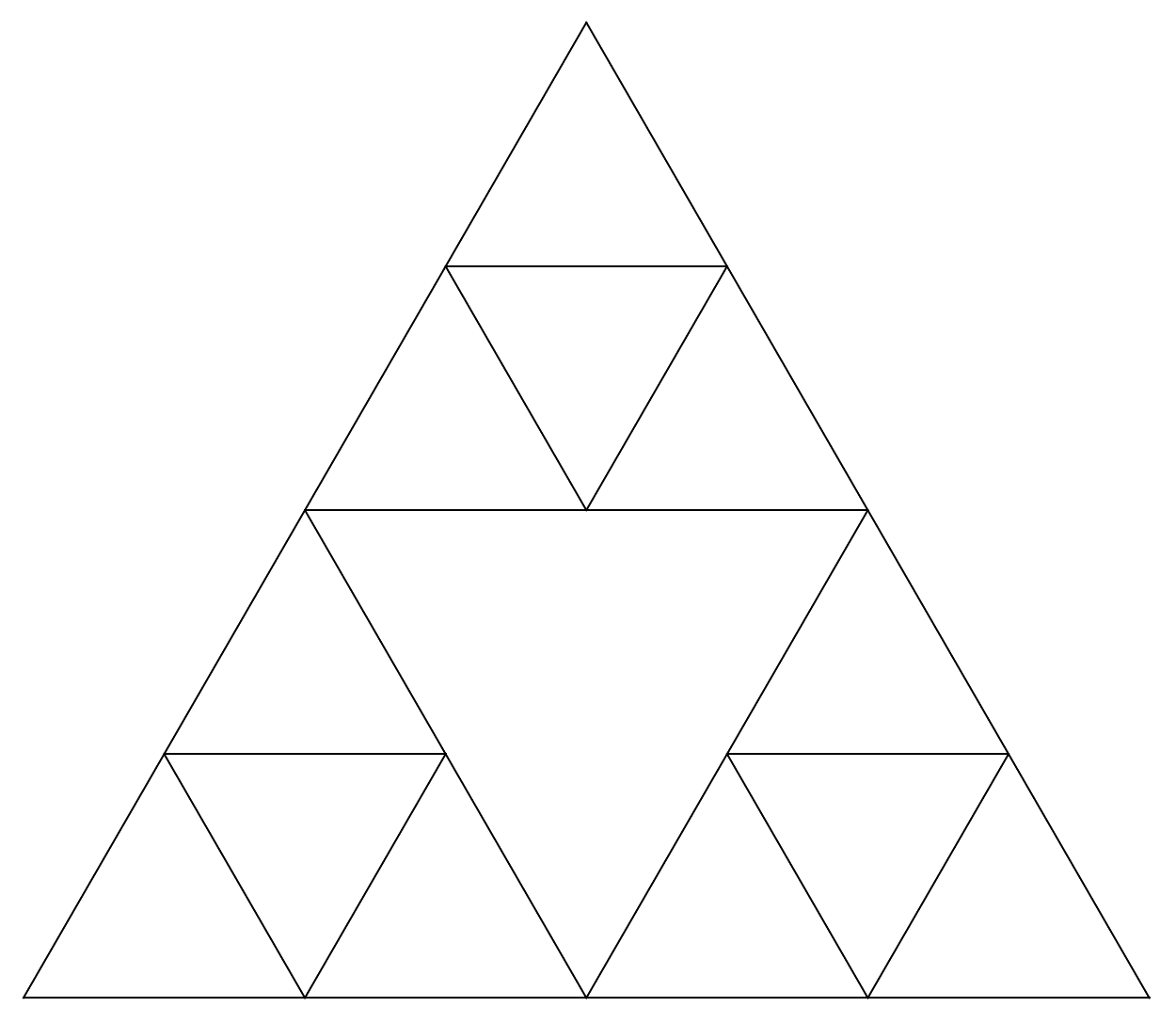}
\begin{center}
\begin{picture}(0,0) \thicklines
\put(-180,70){$1$}
\put(-143,8){$1$}
\put(-108,70){$1$}
\put(-206,8){$0$}
\put(-80,8){$0$}\put(101,8){$-1$}
\put(-143,127){$0$}\put(163,8){$-1$}
\put(-3,127){$0$}\put(138,127){$0$}\put(158,95){$0$}
\put(-3,8){$0$}\put(200,8){$0$}\put(118,95){$0$}
\put(-66,8){$0$}\put(75,8){$0$}\put(138,63){$0$}
\put(60,8){$0$}\put(138,8){$2$}\put(187,44){$1$}
\put(31,70){$-1$}\put(172,70){$0$}\put(141,44){$-1$}
\put(-38,70){$1$}\put(102,70){$0$}\put(87,44){$1$}\put(123,44){$-1$}
\put(-156,-5){$\lambda_1=2$}
\put(-16,-5){$\lambda_1=5$}
\put(124,-5){$\lambda_2=6$}
%\put(-105.6,46.4){$q_3$}
%\put(95.6,72.4){$q_1$}
%\put(64.6,5.4){$(b)$}
\end{picture}

\textbf{\\Figure 2. Three typical Dirichlet eigenfunctions with $\lambda_1=2$ or $5$ and $\lambda_2=6$}
\end{center}\end{center}
\end{figure}
 The reader is referred to the books \cite{K4} and \cite{St6} for exact definitions and any unexplained notations. 

In \cite{DSV}, it was proved that the restriction of any harmonic function along edges is either monotone or having a single extremum. And from the numerical computations shown in \cite{DSV}, it appears that there is a much more complicated structure for Laplacian eigenfunctions. The restrictions sometimes are monotone, and sometimes may attain several local extrema. Moreover, for a graph eigenfunction on $\Gamma_{m}$ for some $m$, if we index all the eigenfunctions, initialed from it and continued by the spectral decimation (choosing all but a finite number of $\varepsilon_{m'}=-1$ with $m'\geq {m+1}$), by a natural number $n$ so that the corresponding eigenvalue is increasing of $n$, then the exact number of the local extrema along edges seems to satisfy some pattern which is a function of $n$. In particular, let's start from the three typical Dirichlet eigenfunctions shown in Fig. 2 to get three basic families of eigenfunctions, denoted by $\{\psi_n^{(2)}\}_{n\geq 0}$, $\{\psi_n^{(5)}\}_{n\geq 0}$ and $\{\psi_n^{6}\}_{n\geq 0}$, with $\psi_0^{(2)}$, $\psi_0^{(5)}$ choosing all $\varepsilon_m=-1$ for $m\geq 2$, and $\phi_{0}^{(6)}$ choosing $\varepsilon_{3}=1$ and all $\varepsilon_m=-1$ for $m\geq 4$. Then it was conjectured in \cite{DSV} that the numbers of the local extrema of the restrictions of $\psi_n^{(2)}$, $\psi_n^{(5)}$ and $\psi_n^{(6)}$ to the bottom edge of $\mathcal{SG}$ are $2n+1$, $4[\frac{n}{2}]+2$ and $8n+3$ respectively, and the number of local extrema of the function $\psi_n^{(5)}$ restricted to the left edge of $\mathcal{SG}$ should be $2[\frac{n}{2}]+n+1$,  where we use $[x]$ to denote the largest integer no larger than $x$.

Let $E$ be an edge of a $m$-cell in $\mathcal{SG}$, call it a \textit{$m$-edge}. Denote the endpoint of $E$ by $p_0^E$, $p_1^E$ and the midpoint by $p_{01}^E$. For any function $f$ which is non-constant on $E$, denote
\begin{equation*}
r^E(f)=\frac{f(p_1^E)-f(p_{01}^E)}{f(p_{01}^E)-f(p_0^E)}
\end{equation*}
\big(allow $r^E(f)$ to be $\infty$, the point at infinity, not distinguishing between $+\infty$ and $-\infty$, when $f(p_{01}^E)=f(p_0^E)$\big).
We will prove criteria involving the parameter $r^E(f)$ to determine whether the restriction of $f$ to $E$ is monotone or not. More precisely, denote $\phi_i^E: \mathbb{R}^2\rightarrow\mathbb{R}^2$ the contractive mapping by $\phi_i^E(x)=\frac{1}{2}(x-p_i^E)+p_i^E$ for $i=0,1$, and write $E_\tau=\phi^E_\tau E:=\phi^E_{\tau_1}\circ\cdots\phi^E_{\tau_l}E$ for any word $\tau=\tau_{1}\cdots\tau_l$ of length $|\tau|=l$, with $\tau_i=0$ or $1$. Obivously, $E_\tau$ is a $(m+l)$-edge in $\mathcal{SG}$.
We have:
\begin{theorem}\label{thm1}
Let $h$ be a non-constant harmonic function and $E$ be a $m$-edge in $\mathcal{SG}$. Then

(1) if $\frac{1}{4}\leq r^E(h)\leq 4$, then $h|_{E}$ is strictly monotone;

(2) otherwise $h|_{E}$ has a single extremum. 

Moreover, if we denote by 
\begin{equation}\label{eq5}
\alpha^E(h)=\lim_{l\rightarrow\infty}\frac{|h(p_1^{E_{0^l}})-h(p_0^E)|}{|p_1^{E_{0^l}}-p_0^E|} \text{ and }\beta^E(h)=\lim_{l\rightarrow\infty}\frac{|h(p_0^{E_{1^l}})-h(p_1^E)|}{|p_0^{E_{1^l}}-p_1^{E}|},
\end{equation}
 then $\alpha^E(h)=0$ when $r^E(h)=4$, $\beta^E(h)=0$ when $r^E(h)=\frac{1}{4}$,  and otherwise $\alpha^E(h)=\infty$, $\beta^E(h)=\infty$.
\end{theorem}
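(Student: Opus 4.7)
The plan is to convert the $\frac{1}{5}$-$\frac{2}{5}$ extension rule into a recursion for the ratio $r^E(h)$ under edge subdivision, and then analyse the resulting one-dimensional Möbius dynamics. Writing $(a,b,c)=(h(p_0^E),h(p_{01}^E),h(p_1^E))$ and using the extension rule to express the value of $h$ at the third corner of the surrounding cell in terms of $a,b,c$, the midpoints of the half-edges $E_0,E_1$ come out as $(8a+20b-3c)/25$ and $(-3a+20b+8c)/25$. A short computation then yields
\[
r^{E_0}(h)=T_0\bigl(r^E(h)\bigr),\qquad r^{E_1}(h)=T_1\bigl(r^E(h)\bigr),
\]
with $T_0(r)=\frac{3r+8}{17-3r}$ and $T_1(r)=\frac{17r-3}{8r+3}$. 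These are Möbius maps with fixed points $\{4,2/3\}$ and $\{1/4,3/2\}$ respectively; the derivative check shows $4$ and $1/4$ are repelling ($T_0'(4)=T_1'(1/4)=3$) while $2/3$ and $3/2$ are attracting ($T_0'(2/3)=T_1'(3/2)=1/3$). Endpoint evaluation gives the key invariance $T_0([1/4,4])=[7/13,4]\subset[1/4,4]$ and $T_1([1/4,4])=[1/4,13/7]\subset[1/4,4]$.

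For part (1), if $r^E(h)\in[1/4,4]$, iterating this invariance yields $r^{E_\tau}(h)\in[1/4,4]\subset(0,+\infty)$ for every word $\tau\in\{0,1\}^*$. Hence on each sub-edge $E_\tau\subset E$ the three vertex values of $h$ form a strictly monotone triple in a common direction. Since the union of vertex sets over all sub-edges is dense in $E$ and $h$ is continuous, $h|_E$ must be strictly monotone.

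For part (2) I would solve the inequalities $T_i(r)\in[1/4,4]$ explicitly, obtaining $T_0(r)\in[1/4,4]\Leftrightarrow r\in[-1,4]$ and $T_1(r)\in[1/4,4]\Leftrightarrow r\in(-\infty,-1]\cup[1/4,+\infty)$. Consequently, whenever $r^E(h)\notin[1/4,4]$, exactly one of $T_0(r^E(h)),T_1(r^E(h))$ lies outside $[1/4,4]$, the sole exception being $r^E(h)=-1$, at which both land in $[1/4,4]$ and the extremum of $h|_E$ sits at $p_{01}^E$ itself. Iterating this dichotomy constructs a canonical ``extremum path'' $E\supset E_{\tau_1}\supset E_{\tau_1\tau_2}\supset\cdots$ along which the successive ratios remain outside $[1/4,4]$, while on every sibling sub-edge branched off the path part (1) forces $h$ to be strictly monotone. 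The path either terminates at finite depth (when a ratio equals $-1$) giving an extremum at the corresponding midpoint, or persists indefinitely and picks out a unique point $p^*\in E$ by Cantor intersection. The main obstacle will be verifying that the monotonicities on the siblings fit together into a genuine local extremum at the selected point with no additional extrema hidden along the path; this I would handle by tracking the signs of $r_l$ and the directions of monotonicity on successive siblings to show $h$ approaches the extremum value monotonically from each side.

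Finally, for the asymptotic statement, set $\alpha_l=h(p_1^{E_{0^l}})-h(p_0^E)$ and $r_l=T_0^l(r^E(h))$. The definition of $r^{E_{0^l}}(h)$ rearranges directly to $\alpha_{l+1}/\alpha_l=1/(1+r_l)$, and since $|p_1^{E_{0^l}}-p_0^E|=|E|/2^l$,
\[
\alpha^E(h)=\frac{|\alpha_0|}{|E|}\lim_{l\to\infty}\prod_{j=0}^{l-1}\frac{2}{1+r_j}.
\]
When $r^E(h)=4$ each $r_l=4$ and each factor equals $2/5<1$, giving $\alpha^E(h)=0$; otherwise $r_l\to 2/3$, each factor tends to $6/5>1$, the product diverges, and $\alpha^E(h)=+\infty$. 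The $\beta^E(h)$ claim follows by the parallel analysis of the $T_1$-orbit, with factor $2r_l/(1+r_l)$ equal to $2/5$ at the repelling fixed point $r=1/4$ and tending to $6/5$ at the attractor $r=3/2$.
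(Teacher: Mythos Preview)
Your overall strategy matches the paper's exactly: the same M\"obius recursion $r^{\tau 0}=T_0(r^\tau)$, $r^{\tau 1}=T_1(r^\tau)$, the same fixed-point analysis ($4,2/3$ and $1/4,3/2$), the same invariance of $[1/4,4]$ for part~(1), the same binary dichotomy for part~(2), and the same infinite-product formula for $\alpha^E,\beta^E$.

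The place where your plan remains genuinely incomplete is precisely the step you flag as the ``main obstacle'': ruling out spurious extrema at the junction points $p_{01}^\tau$ along the extremum path, and showing the limit point $p^*$ really is an extremum. Your proposed ``tracking of signs and directions of monotonicity'' is not developed, and carrying it out directly would need a careful case analysis across the ranges $(-\infty,-1)$, $(-1,0)$, $(0,1/4)$, $(4,17/3)$, $(17/3,\infty)$, $\{\infty\}$ of $r^\tau$. The paper bypasses this with a separate lemma (its Lemma~2.3): decompose $h|_E$ into its symmetric and antisymmetric parts $h_S,h_A$ about the midpoint $p_{01}$, so that $r(h_S)=-1$ and $r(h_A)=1$. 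Then reuse your own asymptotic computation---$r^{10^l}(h_S)\to 4$ and $r^{01^l}(h_S)\to 1/4$ force the one-sided difference quotients of $h_S$ at $p_{01}$ to vanish, while $r^{10^l}(h_A)\to 2/3$ and $r^{01^l}(h_A)\to 3/2$ force those of $h_A$ to blow up. Since $h_A$ changes sign across $p_{01}$ and dominates there, $p_{01}$ cannot be an extremum of $h=h_S+h_A$ unless $h_A\equiv 0$, i.e.\ unless $r=-1$. With that lemma in hand, the binary search immediately gives ``at most one extremum'', and existence is then verified separately by iterating $T_0$ (or $T_1$) until the ratio passes through $17/3$ (resp.\ $-3/8$) and becomes negative. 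Your Cantor-intersection picture locates the right candidate point, but without this midpoint-exclusion lemma you have not shown it is an extremum nor that the branch points $p_{01}^\tau$ along the way are not.
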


\begin{theorem}\label{thm2}
Let $u$ be a Dirichlet (or Neumann) eigenfunction and $E$ be a $m$-edge in $\mathcal{SG}$ with $m\geq m_1-1$. Suppose $u$ is non-constant on $E$, then

(1) if $\frac{1}{4-\lambda_{m+1}}\leq r^E(u)\leq 4-\lambda_{m+1}$, then $u|_{E}$ is strictly monotone;

(2) otherwise $u|_{E}$ has a single extremum. 

Moreover, we have $\alpha^E(u)=0$ when $r^E(u)=4-\lambda_{m+1}$, $\beta^E(u)=0$ when $r^E(u)=\frac{1}{4-\lambda_{m+1}}$,  and otherwise $\alpha^E(u)=\infty$, $\beta^E(u)=\infty$, where $\alpha^E(u)$, $\beta^E(u)$ are defined similarly to that in (\ref{eq5}) with $h$ replaced by $u$.
\end{theorem}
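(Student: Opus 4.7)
The strategy is a direct adaptation of the proof of Theorem \ref{thm1}, with the harmonic extension rule replaced by the spectral-decimation rule (\ref{eq4}) and the critical ratio $4$ replaced by $4-\lambda_{m+1}$. The algebraic engine driving everything is the spectral-decimation equation $\lambda_m=\lambda_{m+1}(5-\lambda_{m+1})$, which, as will be seen, precisely matches how the critical ratio must update after each subdivision.

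I begin by fixing notation for $E$ and its first refinement: let $a,b,c$ denote the values of $u$ at $p_0^E,p_{01}^E,p_1^E$ respectively, and let $d$ denote the value of $u$ at the third vertex of the unique $m$-cell containing $E$. Formula (\ref{eq4}) at level $m+1$ expresses $b$ in terms of $a,c,d,\lambda_{m+1}$, and also gives the values of $u$ at the two other $V_{m+1}$-vertices of this $m$-cell (the midpoints opposite $p_0^E$ and $p_1^E$). Applying (\ref{eq4}) at level $m+2$ inside the two $(m+1)$-cells containing $E_0$ and $E_1$ then yields the midpoint values on $E_0$ and $E_1$. From these explicit closed forms I compute $r^{E_0}(u)$ and $r^{E_1}(u)$ as rational functions of $r^E(u)$, $d$, $\lambda_{m+1}$ and $\lambda_{m+2}$.

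The key algebraic step is the propagation of the critical ratio. A short computation shows that $r^E(u)=4-\lambda_{m+1}$ forces $d=-(5-\lambda_{m+1})b+\tfrac{(4-\lambda_{m+1})(3-\lambda_{m+1})}{2}a$, and substituting this into the formula for $r^{E_0}(u)$ yields $r^{E_0}(u)=4-\lambda_{m+2}$ once one invokes the identity $4-\lambda_{m+1}=(4-\lambda_{m+2})(1-\lambda_{m+2})$, which is just a rearrangement of $\lambda_{m+1}=\lambda_{m+2}(5-\lambda_{m+2})$. A symmetric calculation gives $r^E(u)=\tfrac{1}{4-\lambda_{m+1}}\Rightarrow r^{E_1}(u)=\tfrac{1}{4-\lambda_{m+2}}$. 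I then check, using the same rational expressions together with monotonicity of $\mu\mapsto 4-\mu$ on $[0,5/2)$, that the closed interval $[\tfrac{1}{4-\lambda_{m+1}},4-\lambda_{m+1}]$ is mapped into the corresponding intervals $[\tfrac{1}{4-\lambda_{m+2}},4-\lambda_{m+2}]$ on both $E_0$ and $E_1$. An induction on the subdivision level, combined with density of the dyadic sub-vertices in $E$ and continuity of $u$, then gives monotonicity of $u|_E$. When $r^E(u)$ lies strictly outside the interval, I show that the ratio escapes further from its allowed range on successive $E_{0^l}$ (or $E_{1^l}$) while on the opposite half it immediately returns to the monotone range, so that exactly one extremum appears on $E$.

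For the asymptotic slopes the same recursion does the work. In the critical case $r^E(u)=4-\lambda_{m+1}$, normalising $a=0$ gives $u(p_1^{E_{0^{l+1}}})=u(p_1^{E_{0^l}})/(5-\lambda_{m+l+1})$ at every level, so the finite-scale slope gets multiplied by $2/(5-\lambda_{m+l+1})$ per subdivision; since $m\geq m_1-1$ forces $\varepsilon_{m+k}=-1$ and hence $\lambda_{m+k}\to 0$ as $k\to\infty$, this factor tends to $\tfrac{2}{5}$ and the product decays geometrically, giving $\alpha^E(u)=0$. In the non-critical case the analogous slope-multiplier stays bounded away from $\tfrac{2}{5}$ from above, so the product blows up and $\alpha^E(u)=\infty$; the statements for $\beta^E(u)$ are symmetric. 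The main obstacle I expect is the algebraic bookkeeping needed to derive the two-parameter recursion $(r^E(u),d)\mapsto(r^{E_0}(u),d_0)$ and to verify the monotone escape of $r$ outside the critical interval; the restriction $m\geq m_1-1$ is used precisely so that $\lambda_{m+1}\in[0,5/2)$, making the interval $[\tfrac{1}{4-\lambda_{m+1}},4-\lambda_{m+1}]$ meaningful and guaranteeing the geometric decay above.
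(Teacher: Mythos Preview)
Your overall strategy matches the paper's: replace the critical ratio $4$ from the harmonic case by $4-\lambda_{m+l+1}$, verify via the spectral-decimation identity that this value propagates under subdivision, and run a binary search outside the critical interval. Two comments.

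First, a simplification you are missing. You treat $(r,d)$ as a two-parameter recursion, but $d$ is determined by $a,b,c$ and $\lambda_{m+1}$ through (\ref{eq4}), and by affine invariance of ratios $r^{E_0}$ then depends only on $r^E$ and the eigenvalue sequence. The paper makes this explicit: equation (\ref{eq32}) gives $u(p_{001}^\tau)$ purely as a combination of $u(p_0^\tau),u(p_{01}^\tau),u(p_1^\tau)$, whence (Lemma \ref{lemma31}) $r^{\tau0}=\frac{a_l+b_lr^\tau}{c_l-b_lr^\tau}$ and $r^{\tau1}=\frac{c_lr^\tau-b_l}{a_lr^\tau+b_l}$ with $a_l,b_l,c_l$ depending on $\lambda_{m+l+2}$ alone. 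Your route would eventually discover that $d$ cancels, but the one-dimensional M\"obius recursion is what makes both the interval-preservation check and the binary search transparent.

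Second, a genuine gap. Your binary search (``one half returns to the monotone range, the other escapes'') tacitly assumes that the midpoint $p_{01}^\tau$ is never itself a local extremum of $u|_E$. This needs proof: when $r^\tau=-1$ both halves are in fact monotone (each landing on the boundary of its critical interval by Lemma \ref{lemma32}) and $p_{01}^\tau$ \emph{is} the extremum, while for $r^\tau\neq -1$ one must show it is not. The paper does this in Lemma \ref{lemma33} by splitting $u|_E$ into its symmetric and antisymmetric parts about $p_{01}$ and comparing their one-sided slopes there; the symmetric part has zero slope on both sides while the antisymmetric part has infinite slope, so the latter dominates and $p_{01}$ cannot be an extremum. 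Without this lemma neither uniqueness nor existence in case (2) is secured; in particular your sketch does not explain why an extremum must occur at all when, say, $r^E(u)\in(4-\lambda_{m+1},\infty)$. Your treatment of $\alpha^E$ and $\beta^E$ is essentially correct; in the non-critical case the precise fact needed is $r^{0^l}\to\tfrac{2}{3}$, so the slope multiplier tends to $\tfrac{6}{5}>1$.
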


We remark here that  part (1) in Thoerem \ref{thm1} is exact Theorem 1 in \cite{DSV}, and the values of $\alpha^E$ and $\beta^E$ reflect the asymptotic behavior of $h|_E$ (or $u|_{E}$) near the boundary of the edge $E$. The assumption $u$ is non-constant along $E$ in Theorem \ref{thm2} is necessary since there indeed exist eigenfunctions which are constant along certain edges, see Theorem 5 in \cite{DSV}. The result in Theorem \ref{thm2} also holds for general eigenfunctions when $m$ is sufficiently large since $\lambda_m\rightarrow 0$ as $m$ goes to $\infty$.

Note that the assumption $m\geq m_1-1$ in Theorem \ref{thm2} makes all $\varepsilon_{m+i}=-1$ for $i\geq 1$. Without this assumption, things will be much more complicated. In fact, when $m<m_1-1$, in general, the restriction of $u$ to $E$ will attain several local extrema. The following theorem calculate the exact number of extrema that occur in this case. 

\begin{theorem}\label{thm3}
Let $u$ be a Dirichlet (or Neumann) eigenfunction and $E$ be a $m$-edge in $\mathcal{SG}$ with $m\geq m_0$ (We require $m>m_0$ if $u$ is a $6$-series eigenfunction.). Suppose $u$ is non-constant on $E$, and let $\{\psi_n\}_{n\geq 0}$ be the sequence of all eigenfunctions initialed from $u|_{V_m}$ with the ordering making the corresponding eigenvalue an  increasing function of $n$. Then
 \begin{equation*}
      N^E(\psi_n)=
      \begin{cases}
     2[\frac{n+1}{2}],\quad &\text{if } \frac{1}{4-\lambda^{(0)}_{m+1}}<r^E(\psi_0)<4-\lambda^{(0)}_{m+1},\\
   n, \quad &\text{if } r^E(\psi_0)=\frac{1}{4-\lambda^{(0)}_{m+1}}\text{ or }4-\lambda^{(0)}_{m+1},\\
      2[\frac{n}{2}]+1,  &\text{otherwise},\\
      \end{cases}
      \end{equation*}
      where $N^E(\psi_n)$ denotes the number of local extrema of $\psi_n$ in $E$, and $\lambda^{(0)}_{m+1}$ is the $(m+1)$-th graph eigenvalue of $\psi_0$, i.e., $\lambda^{(0)}_{m+1}=\frac{5-\sqrt{25-4\lambda_m}}{2}$.
\end{theorem}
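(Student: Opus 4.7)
The plan is to prove Theorem \ref{thm3} by induction on the depth $k := m_1(\psi_n) - m$ of the non-trivial portion of the $\varepsilon$-sequence of $\psi_n$ above level $m$. The base case $k=1$ forces $\varepsilon_j^{(n)} = -1$ for all $j > m$, hence $n=0$ and $\psi_n = \psi_0$; the hypothesis $m \geq m_1(\psi_0) - 1$ of Theorem \ref{thm2} is then satisfied, and substituting $n = 0$ into the three branches of Theorem \ref{thm3} gives $0, 0, 1$, matching exactly the monotone (strict), monotone (boundary), and single-extremum outcomes of Theorem \ref{thm2}.

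For the inductive step, I would split $E$ at its midpoint $p_{01}^E$ into the two $(m+1)$-edges $E_0, E_1$. Every local extremum of $\psi_n|_E$ is either an interior extremum of one $\psi_n|_{E_i}$ or coincides with the midpoint, giving
\[
N^E(\psi_n) = N^{E_0}(\psi_n) + N^{E_1}(\psi_n) + \chi(p_{01}^E),
\]
with $\chi \in \{0,1\}$. On each sub-cell $\psi_n|_{V_{m+1}}$ is a graph $\lambda_{m+1}^{(n)}$-eigenfunction, and $\psi_n|_{E_i}$ is a member of the analogous spectral-decimation sequence on that sub-cell whose depth at level $m+1$ is $k-1$; the inductive hypothesis therefore applies to each $N^{E_i}(\psi_n)$. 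The key analytic input is the transformation law: using the extension formula (\ref{eq4}) with eigenvalue $\lambda_{m+1}^{(n)}$, the values of $\psi_n$ at the midpoints of $E_0$ and $E_1$ are explicit linear combinations of the values at $p_0^E, p_{01}^E, p_1^E$ and at the third vertex of the $m$-cell containing $E$, so $r^{E_0}(\psi_n), r^{E_1}(\psi_n)$ are Möbius-type functions of $r^E(\psi_n)$ and $\lambda_{m+1}^{(n)}$. One then verifies that the critical ratios $\{1/(4-\lambda_{m+1}^{(0)}),\, 4-\lambda_{m+1}^{(0)}\}$ at level $m$ transform into the corresponding critical ratios at level $m+1$ for the base eigenfunction of the sub-sequence, so that the three cases (strict interior, boundary, exterior) propagate correctly through the recursion. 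The indicator $\chi(p_{01}^E)$ is detected from the signs of the limiting slopes $\beta^{E_0}(\psi_n)$ and $\alpha^{E_1}(\psi_n)$ at $p_{01}^E$, which by Theorem \ref{thm2} vanish or diverge according to the refined $r$-values.

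The main obstacle will be the combinatorial bookkeeping that distils this binary-tree recursion into the stated floor-function formula. The ordering $n \mapsto \psi_n$ is not a product over the two sub-sequences on $E_0, E_1$: a function with index $n$ restricts to sub-functions indexed by some $(n_0, n_1)$, and the map $n \mapsto (n_0, n_1)$ must be identified from the asymptotic behaviour $\lambda \sim \frac{3}{2}\cdot 5^{m+k+1}$, which shows $\varepsilon$-sequences are sorted primarily by the position of the latest $+1$. A secondary subtlety is that when $\varepsilon_{m+1}^{(n)} = +1$ the sub-sequence base $\psi_0'$ on the sub-cell differs from $\psi_0|_{\mathrm{sub\text{-}cell}}$, so the identification of the three cases at the sub-level requires a separate verification. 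Once these pieces are in place, identities such as $2\lfloor (n+1)/2\rfloor = 2\lfloor (n_0+1)/2\rfloor + 2\lfloor (n_1+1)/2\rfloor + \chi$ in each case are routine to check. Finally, the $6$-series exclusion at $m = m_0$ is forced by the degeneration $\lambda_{m_0+1}^{(0)} = 3$, which collapses both critical ratios to $1$, trivialises the dichotomy of Theorem \ref{thm2}, and breaks the base case — hence we require $m > m_0$ for $6$-series eigenfunctions.
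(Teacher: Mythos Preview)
Your edge-splitting recursion is a genuinely different organisation from the paper's argument, and while plausible in outline, it carries real difficulties that you have not resolved. The paper does \emph{not} recurse on the level $m$ of the edge. Instead it fixes $E$ and recurses on the index $n$: for each $n\neq 0$ it introduces the companion eigenfunction $\tilde\psi_n$ obtained by flipping the last $+1$ in the $\varepsilon$-sequence to $-1$, shows that $\tilde\psi_n=\psi_{n'}$ with $n+n'=2^{q}-1$ when $2^{q-1}\le n<2^q$, and proves (Lemmas~\ref{lemma34} and~\ref{lemma35}) the complementarity
\[
N^E(\psi_n)+N^E(\psi_{2^q-1-n})=2^{q}\quad\text{(or }2^q-1\text{ in the boundary case).}
\]
The key technical fact (Lemma~\ref{lemma34}) is that on every $(m_1-1)$-subedge $E_\tau$, exactly one of $r^\tau(\psi_n)$ and $r^\tau(\tilde\psi_n)$ lies in the critical interval, so by Theorem~\ref{thm2} exactly one of the two functions has an extremum there. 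The floor formulas then drop out by an elementary induction on $n$, with no midpoint indicator and no $(n_0,n_1)$-bookkeeping.

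Your scheme, by contrast, must track how the three regimes propagate to \emph{both} sub-edges $E_0,E_1$ under \emph{both} choices $\varepsilon_{m+1}^{(n)}=\pm 1$ (since the sub-family base at level $m+1$ is $\psi_0$ in one case and $\psi_1$ in the other), and then reconcile the results with the indicator $\chi(p_{01}^E)$. Two concrete gaps: (i) you invoke Theorem~\ref{thm2} to read off $\chi$ from the slopes $\alpha^{E_1},\beta^{E_0}$, but Theorem~\ref{thm2} applies only when the sub-edge sits at level $\ge m_1(\psi_n)-1$, which fails at every intermediate step of your induction---you need Lemma~\ref{lemma33} (one direction) and a separate argument for the converse when $r=-1$; (ii) the ``routine'' identity you quote, $2\lfloor(n+1)/2\rfloor=2\lfloor(n_0+1)/2\rfloor+2\lfloor(n_1+1)/2\rfloor+\chi$, cannot be right as stated, because $n_0=n_1$ (both sub-edges see the same restriction $\psi_n|_{V_{m+1}}$) while the regimes on $E_0$ and $E_1$ generally differ, so the two summands are drawn from different branches of the formula. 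Disentangling this requires exactly the interval-flipping analysis of Lemma~\ref{lemma34}, which you would end up reproving anyway. Finally, a small slip: for the $6$-series, $\lambda_{m_0+1}^{(0)}=\frac{5-\sqrt{25-24}}{2}=2$, not $3$; the obstruction is that $\lambda_{m+1}=2$ is a forbidden value in the extension rule~(\ref{eq4}), so $\psi_0$ with all $\varepsilon=-1$ from level $m_0$ simply does not exist.
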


Note that Theorem \ref{thm3} is a natural extension of Theorem \ref{thm2}, since in case of Theorem \ref{thm2}, $u$ is $\psi_0$. By using Theorem \ref{thm3}, with a bit of more work, we can check that the conjecture in \cite{DSV} which we mentioned before is true. That is

\begin{theorem}\label{thm4}
Let $\{\psi_n^{(2)}\}_{n\geq 0}$, $\{\psi_n^{(5)}\}_{n\geq 0}$ and $\{\psi_n^{(6)}\}_{n\geq 0}$ be the three basic families of eigenfunctions initialed from the graph eigenfunctions in Fig. 2. The numbers of local extrema along the bottom edge of $\mathcal{SG}$ of these functions are $2n+1$, $4[\frac{n}{2}]+2$ and $8n+3$ respectively, and the number of local extrema of $\psi_n^{(5)}$ along the left edge of $\mathcal{SG}$ is $2[\frac{n}{2}]+n+1$.
\end{theorem}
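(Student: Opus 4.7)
The plan is to deduce Theorem \ref{thm4} from Theorem \ref{thm3} by a direct case-by-case computation on each of the three basic families. The common observation is that every eigenfunction in a given family $\{\psi_n^{(\ast)}\}$ shares the same initial graph eigenfunction of Figure 2 on $V_{m_0}$, so one (or, for the $6$-series, two) applications of the extension formula (\ref{eq4}) determines the values of $\psi_0^{(\ast)}$ on $V_{m_0+1}$ (respectively $V_{m_0+2}$) as explicit expressions in $\lambda^{(0)}_{m_0+1}$ (and $\lambda^{(0)}_{m_0+2}$). From these values one reads off, independently of $n$, the ratios $r^{E'}(\psi_0^{(\ast)})$ on each $m_0$-subedge (respectively $(m_0+1)$-subedge) $E'$ contained in the bottom or left edge of $\mathcal{SG}$.

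Next, I decompose the bottom or left edge of $\mathcal{SG}$ into $2^m$ consecutive $m$-subedges, with $m = m_0$ for the $2$- and $5$-series and $m = m_0+1 = 3$ for the $6$-series (the latter choice is forced by the restriction $m > m_0$ in Theorem \ref{thm3} for $6$-series eigenfunctions). For each subedge $E'$, Theorem \ref{thm3} determines the number of local extrema of $\psi_n^{(\ast)}$ interior to $E'$ according to which of the three cases $r^{E'}(\psi_0^{(\ast)})$ falls in with respect to $4-\lambda^{(0)}_{m+1}$. The total number of local extrema of $\psi_n^{(\ast)}$ along the full edge is then the sum of these per-subedge counts plus a correction equal to the number of interior $V_m$-vertices on the edge at which the restriction of $\psi_0^{(\ast)}$ attains a local extremum; this correction is independent of $n$ because the $V_{m_0}$-values along the edge are fixed across the family.

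For the $2$-series, the midpoint $q_{01}$ is a persistent extremum (values $0,1,0$ at $q_0,q_{01},q_1$), and a short computation with (\ref{eq4}) and $\lambda_2^{(0)}=(5-\sqrt{17})/2$ places each of the two $1$-subedges in the borderline case with ratio $1/(4-\lambda_2^{(0)})$, giving a total of $n+1+n=2n+1$. For the $5$-series, the analogous computation on the two $1$-subedges of the bottom edge and of the left edge, together with the Figure 2 initial values, identifies which half is in which case of Theorem \ref{thm3} and whether the midpoint is a persistent extremum, yielding $4[\tfrac{n}{2}]+2$ and $2[\tfrac{n}{2}]+n+1$ respectively. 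For the $6$-series on the bottom edge, two rounds of (\ref{eq4}) using $\lambda_2^{(0)}=6$, $\lambda_3^{(0)}=3$, and $\lambda_4^{(0)}=(5-\sqrt{13})/2$ place each of the eight $3$-subedges in the borderline case (contribution $8n$), the three interior $V_2$-vertices on the bottom edge are each persistent extrema (contribution $3$), and the four interior vertices in $V_3 \setminus V_2$ are checked not to be persistent extrema, for a total of $8n+3$.

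The main obstacle is the bookkeeping in the previous paragraph: for each family, subedge, and edge of $\mathcal{SG}$ in question, correctly identifying which case of Theorem \ref{thm3} applies and which interior $V_m$-vertices along the edge are persistent local extrema of the restriction. The $5$-series on the left edge is subtle because the two halves fall in different cases of Theorem \ref{thm3}, and the $6$-series is the most involved since it requires two rounds of (\ref{eq4}) and eight subedges to check; in both, the symmetries of the Figure 2 initial data across reflection axes of the relevant subcells reduce the casework substantially.
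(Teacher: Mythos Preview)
Your outline matches the paper's proof: decompose the edge into $m$-subedges at level $m=m_0$ (or $m_0+1$ for the $6$-series), compute each $r^{E'}(\psi_0^{(\ast)})$ from (\ref{eq4}) and (\ref{r}), read off $N^{E'}(\psi_n^{(\ast)})$ from Theorem \ref{thm3}, and then account for the interior $V_m$-vertices. The specific case placements you state (borderline for both halves in the $2$-series, ``otherwise'' for both halves of the bottom edge and a split otherwise/borderline for the left edge in the $5$-series, borderline for all eight $3$-subedges in the $6$-series) are exactly what the paper finds.

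There is one point where your stated justification is too weak. You write that the vertex correction is ``independent of $n$ because the $V_{m_0}$-values along the edge are fixed across the family.'' Fixing the discrete values does \emph{not} by itself determine whether an interior $V_m$-vertex is a local extremum of the continuous restriction, and in fact the naive reading of the $V_1$-data can mislead: for $\psi_n^{(5)}$ on the left edge the values $0,1,0$ at $x_0,y_1,x_2$ suggest a maximum at $y_1$, yet $y_1$ is \emph{not} an extremum for any $n$. The paper settles this by the symmetric/antisymmetric decomposition of $\psi_n^{(5)}$ about the axis through $y_1$ (the same mechanism as in Lemma \ref{lemma33}), which shows the antisymmetric part dominates near $y_1$ for every $n$. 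Likewise, for the $6$-series the non-extremality of the $V_3\setminus V_2$ vertices $w_2,w_5$ (and their mirrors) is obtained not from the $V_3$-values alone but from the equality of the adjacent ratios $\frac{\psi_n^{(6)}(z_2)-\psi_n^{(6)}(w_2)}{\psi_n^{(6)}(w_2)-\psi_n^{(6)}(x_0)}=\frac{\psi_n^{(6)}(z_2)-\psi_n^{(6)}(w_5)}{\psi_n^{(6)}(w_5)-\psi_n^{(6)}(y_2)}$, which forces matching one-sided behavior at those vertices; only the $V_2$-vertices $z_2,y_2,z_5$ survive as genuine extrema. So your plan is right, but when you carry it out you must replace ``the $V_{m_0}$-values are fixed'' by these local-symmetry/ratio arguments at the joining vertices.
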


The paper is organized as follows. In Section 2, we will deal with the restrictions of harmonic functions to edges and prove Theorem \ref{thm1}.  In Section 3, we come to the Laplacian eigenfunctions and prove Theorem \ref{thm2} and \ref{thm3}. As an application of Theorem \ref{thm3}, we will prove Theorem \ref{thm4} in Section 4. There are a number of interesting features observable in the numerical computations of harmonic functions and Laplacian eigenfunctions for $\mathcal{SG}$. In this paper we have deliberately restricted our attention to the local behavior of these special functions restricted to edges. See \cite{DRS} for a related result concerning the gradients of the eigenfunctions on the Sierpinski gasket. 

\section{restrictions of harmonic functions to edges}

In this section, we study the local behavior of the restrictions of harmonic functions to edges. We will prove Theorem \ref{thm1}. 

Let $h$ be a non-constant harmonic function and $E$ be a $m$-edge in $\mathcal{SG}$. It is known that $h$ is non-constant on $\mathcal{SG}$ if and only if $h$ is non-constant on $E$. We denote $p_0^E$, $p_1^E$, $p_{01}^E$, $r^E(h)$, and $E_\tau$ as we did in Section 1, where $\tau$ is any word $\tau=\tau_1\cdots\tau_l$ with $\tau_i=0$ or $1$ and $|\tau|=l\geq 0$. Without causing any confusion, we will remove the superscript $E$ for simplicity, and write $r$, $r^\tau$, $p_0^\tau$, $p_1^\tau$, $p_{01}^\tau$ instead of $r^E(h)$, $r^{E_\tau}(h)$, $p_0^{E\tau}$, $p_1^{E_\tau}$, $p_{01}^{E_\tau}$ for short. Obviously, for any word $\tau$, $p_{01}^\tau=p_0^{\tau 1}=p_1^{\tau 0}$, and 
\begin{equation*}
r=\frac{h(p_1)-h(p_{01})}{h(p_{01})-h(p_0)}
\text{ and }
r^\tau=\frac{h(p_1^\tau)-h(p_{01}^\tau)}{h(p_{01}^\tau)-h(p_0^\tau)},
\end{equation*}
where we allow $r$, $r^\tau$ to take $\infty$, the point at infinity, gluing $\pm \infty$ together as a single point, when $h(p_{01})=h(p_0)$ or $h(p_{01}^\tau)=h(p_0^\tau)$. If we denote $p_2$ the other vertex in the $m$-cell containing $E$, then we also have
\begin{equation*}
r=\frac{3h(p_1)-2h(p_0)-h(p_2)}{2h(p_1)+h(p_2)-3h(p_0)}
\end{equation*}
by using the ``$\frac{1}{5}-\frac{2}{5}$'' extension rule.

There is an algorithm which enables us to calculate the restriction of a harmonic function to any edge in $\mathcal{SG}$. Let $p_{001}^\tau$ be the midpoint of the edge connecting $p_0^\tau$ and $p_{01}^\tau$. Then
\begin{equation}\label{eq7}
h(p_{001}^\tau)=\frac{4}{5}h(p_{01}^\tau)+\frac{8}{25}h(p_{0}^\tau)-\frac{3}{25}h(p_1^\tau).
\end{equation}
See Algorithm 2.2 in \cite{DSV} for  a proof of (\ref{eq7}) by using the ``$\frac{1}{5}-\frac{2}{5}$'' rule.

\begin{lemma}\label{lemma1}
For any word $\tau$, we have 
\begin{equation}\label{eq6}
r^{\tau 0}=\frac{8+3r^\tau}{17-3 r^\tau}
\text{ and }r^{\tau1}=\frac{17r^\tau-3}{8r^\tau+3},
\end{equation} 
where the two mappings in (\ref{eq6}) are both bijections: $\mathbb{R}\cup\{\infty\}\rightarrow\mathbb{R}\cup\{\infty\}$. In particular, $r^{\tau 0}=-1$, $r^{\tau 1}=\frac{17}{8}$ when $r^\tau=\infty$. See Fig. 3 for the graphs of $r^{\tau 0}$ and $r^{\tau1}$ of $r^\tau$.
\end{lemma}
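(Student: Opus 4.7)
The plan is to derive both identities in (\ref{eq6}) by direct substitution, using the formula (\ref{eq7}) that expresses the value of $h$ at the midpoint $p_{001}^\tau$ of the ``lower'' half-edge as a linear combination of $h(p_0^\tau)$, $h(p_{01}^\tau)$, $h(p_1^\tau)$. Throughout I would abbreviate $a=h(p_0^\tau)$, $b=h(p_{01}^\tau)$, $c=h(p_1^\tau)$, so that $r^\tau=(c-b)/(b-a)$, and then carry the computation out separately for the two halves $E_{\tau 0}$ and $E_{\tau 1}$.

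For the $r^{\tau 0}$ identity, the edge $E_{\tau 0}$ has endpoints $p_0^\tau$ and $p_{01}^\tau$ and midpoint $p_{001}^\tau$, so by definition $r^{\tau 0}=(b-h(p_{001}^\tau))/(h(p_{001}^\tau)-a)$. Substituting (\ref{eq7}) and clearing a common factor of $1/25$ from numerator and denominator produces an expression of the form $(\alpha_1 a+\beta_1 b+\gamma_1 c)/(\alpha_2 a+\beta_2 b+\gamma_2 c)$. Because adding a constant to $h$ leaves both sides unchanged, the coefficient sums $\alpha_i+\beta_i+\gamma_i$ must vanish, and so each of the two linear forms can be rewritten as an integer combination of the basis pair $\{c-b,\, b-a\}$; dividing through by $b-a$ then delivers $(8+3r^\tau)/(17-3r^\tau)$. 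For $r^{\tau 1}$, I would invoke the reflected form of (\ref{eq7}) obtained by swapping the roles of $p_0^\tau$ and $p_1^\tau$, namely $h(p_{011}^\tau)=\frac{4}{5}b+\frac{8}{25}c-\frac{3}{25}a$ (which is itself a direct consequence of the symmetry of the ``$\frac{1}{5}$--$\frac{2}{5}$'' rule applied with the endpoints relabelled), and repeat the same manipulation on $E_{\tau 1}$ to obtain $(17r^\tau-3)/(8r^\tau+3)$.

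For the bijection claim, both formulas are Möbius transformations whose associated $2\times 2$ coefficient matrices $\bigl(\begin{smallmatrix}3&8\\-3&17\end{smallmatrix}\bigr)$ and $\bigl(\begin{smallmatrix}17&-3\\8&3\end{smallmatrix}\bigr)$ each have determinant $3\cdot 17+8\cdot 3=75\neq 0$, so they define bijections of $\mathbb{R}\cup\{\infty\}$. The limiting values $r^{\tau 0}=-1$ and $r^{\tau 1}=17/8$ at $r^\tau=\infty$ then follow by evaluating the Möbius maps at infinity; this is the consistent interpretation when $b=a$, since in that case the identities $h(p_{001}^\tau)-a=\tfrac{3}{25}(c-b)$ and $h(p_{011}^\tau)-b=\tfrac{3}{25}(c-b)$ show that the relevant denominators appearing in $r^{\tau 0}$ and $r^{\tau 1}$ remain nonzero whenever $h$ is nonconstant on $E_\tau$.

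There is essentially no serious obstacle here: once (\ref{eq7}) is in hand and its reflected counterpart is granted, the entire lemma reduces to linear algebra in the variables $(c-b)$ and $(b-a)$. The only care required is the Möbius-style bookkeeping at $r^\tau=\infty$, so that the two maps remain bijections of the projective line rather than partially defined maps of $\mathbb{R}$; this is automatic from the nonvanishing of the two determinants.
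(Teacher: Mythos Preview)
Your proposal is correct and follows essentially the same approach as the paper: the paper's proof simply states that the first identity follows from (\ref{eq7}) by an elementary calculation, the second by a symmetric consideration, and that bijectivity can be checked directly. Your write-up just fills in the details of this computation (including the M\"obius-determinant check for bijectivity), so there is no substantive difference.
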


\begin{figure}[h]
\begin{center}
\includegraphics[width=11cm]{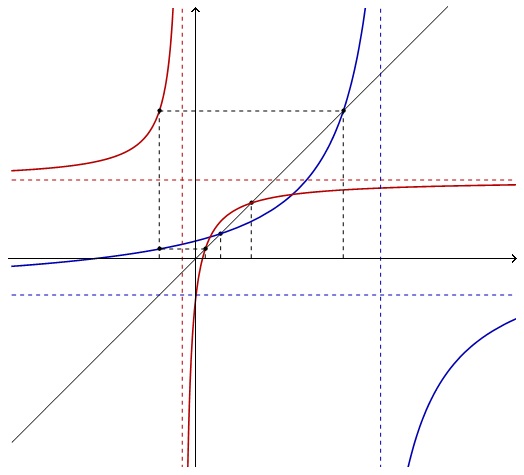}
\begin{center}
\begin{picture}(0,0) \thicklines
\put(154,145){$r^\tau$}
\put(124,180){$r^{\tau1}$}
\put(134,90){$r^{\tau0}$}
\put(43,280){$r^{\tau0}$}
\put(-71,280){$r^{\tau1}$}
\put(-70,139){\tiny$-1$}\put(-59,138){\tiny$-\frac38$}\put(-36,138){\tiny$\frac14$}\put(-27,138){\tiny$\frac23$}
\put(-9,138){\tiny$\frac32$}\put(47,138){\tiny$4$}\put(71,138){\tiny$\frac{17}{3}$}
\put(-40,196){\tiny$\frac{17}{8}$}\put(-40,118){\tiny$-1$}\put(-38,236){\tiny$4$}

%\put(95.6,72.4){$q_1$}
%\put(64.6,5.4){$(b)$}
\end{picture}
\textbf{\\Figure 3. Graphs of $r^{\tau0}$ and $r^{\tau1}$ of $r^\tau$}
\end{center}\end{center}
\end{figure}

\begin{proof}
The first equality in (\ref{eq6}) follows from (\ref{eq7}) by an elementary calculation. The second equality follows similarly by a symmetric consideration. It can be directly checked that the two mappings are bijections.
\end{proof}

\begin{remark}\label{re1}
It is easy to check from Fig. 3 that the mapping $f(x)=\frac{8+3x}{17-3x}$ from $\mathbb{R}\cup\{\infty\}$ onto $\mathbb{R}\cup\{\infty\}$ has a single stable fixed point $x=\frac{2}{3}$ and a single unstable fixed point $x=4$, which means that for any $x\neq 4$, the iterated sequence $x, f(x), f(f(x)),\cdots$ will go to $\frac{2}{3}$. Similarly, the mapping $f(x)=\frac{17x-3}{8x+3}$ has a single stable fixed point $x=\frac{3}{2}$ and a single unstable fixed point $x=\frac{1}{4}$. 
\end{remark}

To prove Theorem \ref{thm1}, the following lemma is useful.

\begin{lemma}\label{lemma2}
For any word $\tau$, the restriction of $h$ to $E$ can not attain an extremum at $p_{01}^\tau$ if $r^\tau\neq -1$.
\end{lemma}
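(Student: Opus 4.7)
The plan is to prove the contrapositive: if $h|_E$ has a local extremum at $p_{01}^\tau$, then $r^\tau = -1$. Assume WLOG a local maximum with value $b := h(p_{01}^\tau)$ (the minimum case reduces by replacing $h$ with $-h$), and write $a := h(p_0^\tau)$, $c := h(p_1^\tau)$, and $d := h(p_2^\tau)$, where $p_2^\tau$ is the third vertex of the $(m+|\tau|)$-cell containing $E_\tau$. The ``$\frac{1}{5}-\frac{2}{5}$'' rule gives $b = (2a+2c+d)/5$. My target will be the equality $a = c$: since the hypothesis $r^\tau \neq -1$ asks that $r^\tau$ be defined --- so $(a,b,c)$ cannot all be equal --- $a = c$ forces $b \neq a$, whence $r^\tau = (c-b)/(b-a) = -1$, the desired contradiction.

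To extract $a = c$, I will approach $p_{01}^\tau$ from within $E_{\tau 0}$ along the sequence $q_n := p_{01}^{\tau 0 1^n}$ of midpoints of the nested sub-edges $E_{\tau 0 1^n}$, which shrink to $p_{01}^\tau$ from the left as $n \to \infty$. Applying the mirror form of identity (\ref{eq7}), namely $h(p_{011}^\sigma) = \frac{4}{5} h(p_{01}^\sigma) + \frac{8}{25} h(p_1^\sigma) - \frac{3}{25} h(p_0^\sigma)$, with $\sigma = \tau 0 1^n$ and using $h(p_1^\sigma) = b$, yields the three-term recurrence
\begin{equation*}
h(q_{n+1}) \;=\; \frac{4}{5}\, h(q_n) \;+\; \frac{8}{25}\, b \;-\; \frac{3}{25}\, h(q_{n-1}), \qquad n \geq 0,
\end{equation*}
with the convention $h(q_{-1}) := a$. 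The substitution $\tilde h_n := h(q_n) - b$ removes the inhomogeneity, leaving $\tilde h_{n+1} = \frac{4}{5}\tilde h_n - \frac{3}{25}\tilde h_{n-1}$, whose characteristic roots are $\frac{3}{5}$ and $\frac{1}{5}$. A routine calculation with the initial data $\tilde h_{-1} = a - b$ and $\tilde h_0 = (6a - 5c - d)/25$ produces the closed form
\begin{equation*}
\tilde h_n \;=\; \frac{9(a-c)}{50}\left(\frac{3}{5}\right)^n \;+\; \frac{3a - c - 2d}{50}\left(\frac{1}{5}\right)^n.
\end{equation*}

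Since $q_n \to p_{01}^\tau$ and $p_{01}^\tau$ is a local maximum, $\tilde h_n \leq 0$ for all sufficiently large $n$; because the $(3/5)^n$ mode strictly dominates the $(1/5)^n$ mode at infinity, this forces $9(a-c)/50 \leq 0$, i.e., $a \leq c$. Running the identical argument from the right via the mirror sequence $q_n^+ := p_{01}^{\tau 1 0^n} \subset E_{\tau 1}$ (for which the roles of $a$ and $c$ swap) gives $c \leq a$. Hence $a = c$, as desired. The main obstacle I anticipate is the algebraic bookkeeping in the middle paragraph: choosing the correct mirror form of (\ref{eq7}) at each step so the sub-edges telescope inward toward $p_{01}^\tau$ from the correct side, and correctly pinning down the initial data so the recurrence solves cleanly. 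Once the closed form for $\tilde h_n$ is in hand, the dominant-mode comparison that delivers the contradiction is essentially a single line.
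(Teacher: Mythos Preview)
Your proof is correct and takes a genuinely different route from the paper's.

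The paper argues by decomposing $h|_{E_\tau}$ into a symmetric part $h_S$ (with $r(h_S)=-1$) and an antisymmetric part $h_A$ (with $r(h_A)=1$), then uses the M\"obius dynamics of Lemma~\ref{lemma1} and Remark~\ref{re1} to show that the one-sided difference quotients of $h_S$ at $p_{01}^\tau$ tend to $0$ while those of $h_A$ tend to $\infty$; since $h_A$ dominates and is odd about $p_{01}^\tau$, the values of $h$ lie on opposite sides of $h(p_{01}^\tau)$ as one approaches from the left and from the right, precluding an extremum. Your approach instead solves the constant-coefficient linear recurrence coming from~(\ref{eq7}) explicitly, isolates the dominant $(3/5)^n$ mode with coefficient $\tfrac{9}{50}(a-c)$, and uses the sign constraint from the extremum on both sides to force $a=c$. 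The two arguments are the same phenomenon in different language: your characteristic roots $3/5,\,1/5$ correspond exactly to the stable fixed points $3/2,\,1/4$ (resp.\ $2/3,\,4$) of the $r$-maps in the paper, and your dominant mode is precisely the antisymmetric component. What you gain is a self-contained, elementary argument that bypasses Lemma~\ref{lemma1} and Remark~\ref{re1} entirely; what the paper's decomposition gains is a framework that transports directly to the eigenfunction setting (Lemma~\ref{lemma33}), where the recurrence has level-dependent coefficients $a_l,b_l,c_l$ and a closed-form solution is no longer available.

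One small expositional point: your sentence ``the hypothesis $r^\tau\neq -1$ asks that $r^\tau$ be defined --- so $(a,b,c)$ cannot all be equal'' is not quite the right justification. In the paper's convention $r^\tau$ lives in $\mathbb{R}\cup\{\infty\}$ and is always defined; the reason $a=b=c$ cannot occur is that $h$ is assumed non-constant (cf.\ the opening of Section~2), and $a=b=c$ together with $b=(2a+2c+d)/5$ forces $d=a$, making $h$ constant on the cell and hence on all of $\mathcal{SG}$. With that one-line fix your argument is complete.
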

\begin{proof}

Without loss of generality, we only need to prove that $h$ can not attain an extremum at $p_{01}$ if $r\neq -1$. Suppose $r\neq -1$, then $h(p_0)\neq h(p_1)$. We divide the restriction of $h$ to $E$ into its symmetric part $h_S$ and antisymmetric part $h_A$. That is,
\begin{eqnarray*}
&&h_S(p_0)=h_S(p_1)=\frac{1}{2}\big(h(p_0)+h(p_1)\big),\quad  h_S(p_{01})=h(p_{01}),\\
&&h_A(p_0)=-h_A(p_1)=\frac{1}{2}\big(h(p_0)-h(p_1)\big),\quad  h_A(p_{01})=0,
\end{eqnarray*}
and both $h_S$ and $h_A$ satisfy (\ref{eq7}).
Then we have $h=h_S+h_A$ on $E$, and $r(h_S)=-1$( without loss of generality, assuming that $h_S$ is not a constant function), $r(h_A)=1$. Notice that $h_A$ can not attain an extremum at $p_{01}^\tau$ from its antisymmetry.

Now we consider the asymptotic behavior of $h_S$ and $h_A$ near the point $p_{01}$. By Lemma \ref{lemma1} and Remark \ref{re1}, $\lim_{l\rightarrow\infty}r^{ 10^l}(h_S)=4$,  $\lim_{l\rightarrow\infty}r^{01^l}(h_S)=\frac{1}{4}$,  $\lim_{l\rightarrow\infty}r^{ 10^l}(h_A)=\frac{2}{3}$, and $\lim_{l\rightarrow\infty}r^{01^l}(h_A)=\frac{3}{2}$. Thus,
\begin{eqnarray*}
\beta^{E_{0}}(h_S)&=&\lim_{l\rightarrow\infty}\frac{|h_S(p_0^{01^l})-h_S(p_{01})|}{|p_0^{01^l}-p_{01}|}\\
&=&\frac{|h_S(p_0)-h_S(p_{01})|}{|p_0-p_{01}|}\lim_{l\rightarrow\infty} 2^l\big|\frac{r^{0}(h_S)}{1+r^{0}(h_S)}\cdot\frac{r^{01}(h_S)}{1+r^{01}(h_S)}\cdot\cdots\cdot\frac{r^{01^{l-1}}(h_S)}{1+r^{01^{l-1}}(h_S)}\big|\\&=&0,\\
\alpha^{E_{1}}(h_S)&=&\lim_{l\rightarrow\infty}\frac{|h_S(p_1^{10^l})-h_S(p_{01})|}{|p_1^{10^l}-p_{01}|}\\
&=&\frac{|h_S(p_1)-h_S(p_{01})|}{|p_1-p_{01}|}\lim_{l\rightarrow\infty} 2^l\big|\frac{1}{1+r^{1}(h_S)}\cdot\frac{1}{1+r^{10}(h_S)}\cdot\cdots\cdot\frac{1}{1+r^{10^{l-1}}(h_S)}\big|\\&=&0.
\end{eqnarray*}
 And for $h_A$, similar argument yields that,
\begin{eqnarray*}
\beta^{E_{0}}(h_A)&=&\lim_{l\rightarrow\infty}\frac{|h_A(p_0^{01^l})-h_A(p_{01})|}{|p_0^{01^l}-p_{01}|}=\infty,\\
\alpha^{E_{1}}(h_A)&=&\lim_{l\rightarrow\infty}\frac{|h_A(p_1^{10^l})-h_A(p_{01})|}{|p_1^{10^l}-p_{01}|}=\infty.
\end{eqnarray*}

On the other hand, by symmetry, $h_S(p_0^{01^l})-h_S(p_{01})$ and $h_S(p_1^{10^l})-h_S(p_{01})$ have  same signs, and
$h_A(p_0^{01^l})-h_A(p_{01})$ and $h_A(p_1^{10^l})-h_A(p_{01})$ have opposite signs for $l$ large enough.

Since $h=h_A+h_S$ on $E$, we then obtain that $\beta^{E_{0}}(h)=\infty$, $\alpha^{E_{0}}(h)=\infty$, and $h(p_0^{01^l})-h(p_{01})$ and $h(p_1^{10^l})-h(p_{01})$ have opposite signs for $l$ large enough. It follows that the restriction of $h$ to $E$ can not attain an extremum at $p_{01}$.
\end{proof}

\begin{proof}[Proof of Theorem \ref{thm1}]
From the definition of $r^\tau$ and the continuity of $h|_{E}$ we know that if $r^\tau\in(0,+\infty)$ for any word $\tau$, then $h|_E$ is strictly monotone. Conversely, if $r^\tau\in(-\infty,0)$ for some $\tau$, then $h|_E$ has at least one extremum. 

It is easy to check that if $r^\tau\in(\frac{1}{4},4)$ for some $\tau$, then $r^{\tau 0}\in(\frac{1}{4},4)$ and $r^{\tau1}\in(\frac{1}{4},4)$. So if we assume that $r\in(\frac{1}{4},4)$, then by iteration, we always have $r^\tau\in(\frac{1}{4},4)$ for any word $\tau$, and hence $h|_E$ is strictly monotone. Moreover, similar to the proof of Lemma \ref{lemma2}, we have $\lim_{l\rightarrow\infty}r^{ 0^l}=\frac{2}{3}$, $\lim_{l\rightarrow\infty}r^{1^l}=\frac{3}{2}$, and thus
\begin{eqnarray*}
\alpha&=&\lim_{l\rightarrow\infty}\frac{|h(p_1^{ 0^l})-h(p_0)|}{|p_1^{0^l}-p_0|}\\
&=&\frac{|h(p_1)-h(p_0)|}{|p_1-p_0|}\lim_{l\rightarrow\infty} 2^l\big|\frac{1}{1+r}\cdot\frac{1}{1+r^{0}}\cdot\cdots\cdot\frac{1}{1+r^{0^{l-1}}}\big|=\infty,
\end{eqnarray*}
\begin{eqnarray*}
\beta&=&\lim_{l\rightarrow\infty}\frac{|h(p_0^{ 1^l})-h(p_1)|}{|p_0^{1^l}-p_1|}\\
&=&\frac{|h(p_1)-h(p_0)|}{|p_1-p_0|}\lim_{l\rightarrow\infty} 2^l\big|\frac{r}{1+r}\cdot\frac{r^1}{1+r^{1}}\cdot\cdots\cdot\frac{r^{1^{l-1}}}{1+r^{1^{l-1}}}\big|=\infty,
\end{eqnarray*}
where $\alpha=\alpha^E(h)$, $\beta=\beta^E(h)$ for short.

If we assume $r=4$, then $r^0=4$ and $r^1\in(\frac{1}{4},4)$ by Lemma \ref{lemma1}. By iteration, we always have $r^\tau\in(\frac{1}{4},4]$ for any $\tau$, so $h|_{E}$ is still strictly monotone. However, since $r^{0^l}=4$ for any $l\geq 0$, we have $\alpha=\lim_{l\rightarrow\infty}\frac{|h(p_1^{ 0^l})-h(p_0)|}{|p_1^{0^l}-p_0|}=0$ instead. And there is a similar argument if we assume $r=\frac{1}{4}$. Thus (1) of the theorem follows. 

Now we consider the case that $r\in(-\infty,\frac{1}{4})\cup(4,+\infty)\cup\{\infty\}$. If in particular $r=-1$, then $h|_E$ has at least one extremum. But in this case, $r^0=\frac{1}{4}$ and $r^1=4$, so both $h|_{E_0}$ and $h|_{E_1}$ are strictly monotone as we discussed before. Thus $h|_E$ has a single extremum at $p_{01}$.  If in addition $r\neq -1$, then by Lemma \ref{lemma2}, $p_{01}$ cannot be an extreme point, and we can easily check from Fig. 3 that either $r^{0}\in(\frac{1}{4}, 4)$ and $r^1\notin [\frac{1}{4},4]$ when $r\in(-1,\frac{1}{4})$, or $r^0\notin[\frac{1}{4},4]$ and $r^1\in(\frac{1}{4},4)$ when $r\in(-\infty,-1)\cup(4,+\infty)\cup\{\infty\}$.  So that either $h|_{E_0}$ is strictly monotone and $r^1\notin[\frac{1}{4},4]$, or $h|_{E_1}$ is strictly monotone and $r^0\notin[\frac{1}{4},4]$. Thus we need continue to consider the local behavior of $h$ restricted to $E_1$ or $E_0$. Repeat this process recursively, by Lemma \ref{lemma2}, using a binary search, one can find that there is no more than one extremum for $h|_E$. 

Now we show that there does exist an extremum in this case. If $r\in(\frac{17}{3},+\infty)\cup\{\infty\}$, then $r^0<0$, so $h$ attains an extremum in $E_0$. If $r=\frac{17}{3}$, then $r^0=\infty$ and $r^{00}=-1$ and thus $h$ attains an extremum at $p_{01}^{00}$. If $r\in(4,\frac{17}{3})$, then since $x=4$ is the single unstable fixed point of the mapping $f(x)=\frac{8+3x}{17-3x}$, there exists an number $k\in\mathbb{N}$ such that $r^{0^k}\in[\frac{17}{3},+\infty)\cup\{\infty\}$. Then the case is  similar to the case that $r\in[\frac{17}{3},+\infty)\cup\{\infty\}$. In summary, we have proved that $h$ attains at least one extremum in $E$ when $r\in(4,+\infty)\cup\{\infty\}$. For the case $r\in[0,\frac{1}{4})$, it is similar by symmetric consideration. For the case $r\in(-\infty,0)$, the existence of extrema is obvious. Thus combining the binary search as we described before, we have proved that there indeed exists only one extremum for $h|_E$ when $r\notin[\frac{1}{4},4]$, and thus (2) of the theorem holds.  The boundary behavior is similar to the case for $r\in(\frac{1}{4},4)$.

Thus we have complete the proof of the theorem. 
\end{proof}

\begin{remark}
In case of $r\notin[\frac{1}{4},4]$, the above proof actually yields a binary searching algorithm for the unique extreme point of $h|_E$. In fact, we can recursively define a parameter $\theta(r)$ as a function of $r$ as
 \begin{equation*}
   \theta(r)=
      \begin{cases}
     \frac{1}{2},\quad &\text{if } r=-1,\\
  \frac{1}{2}+\frac{1}{2}\theta(r^1), \quad &\text{if } r\in(-1,\frac{1}{4}),\\
      \frac{1}{2}\theta(r^0),  &\text{otherwise}.\\
      \end{cases}
      \end{equation*}
      Then $p_0+\theta(r)(p_1-p_0)$ is the unique extreme point of $h|_E$.
\end{remark}

\section{restrictions of eigenfunctions to edges}

Now we come to the restrictions of Laplacian eigenfunctions to edges. Let $u$ be a Dirichlet or Neumann eigenfunction associated with an eigenvalue $\lambda$, i.e., $-\Delta u=\lambda u$. We use $m_0$ and $m_1$ to denote its level of birth and level of fixation respectively. For each $m\geq m_0$, we write $\lambda_m$ the graph eigenvalue such that $-\Delta_m u=\lambda_m u$ holds. Let $E$ be a $m$-edge in $\mathcal{SG}$. Throughout this section, we always assume that $u$ is non-constant on $E$. We still use the notations $p_0$, $p_1$, $p_{01}$, $p_0^\tau$, $p_1^\tau$, $p_{01}^\tau$, $r$, $r^\tau$ as we did for the harmonic functions, where $\tau$ is any word $\tau=\tau_1\cdots\tau_l$ with $\tau_i=0$ or $1$ and $|\tau|=l\geq 0$. In particular, 
\begin{equation*}
r=\frac{u(p_1)-u(p_{01})}{u(p_{01})-u(p_0)}\text{ and } r^\tau=\frac{u(p_1^\tau)-u(p_{01}^\tau)}{u(p_{01}^\tau)-u(p_0^\tau)}.
\end{equation*}
By using the extension algorithm (\ref{eq4}), we could write 
\begin{equation}\label{r}
r=\frac{(6-6\lambda_{m+1}+\lambda_{m+1}^2)u(p_1)-(4-\lambda_{m+1})u(p_0)-2u(p_2)}{(4-\lambda_{m+1})u(p_1)+2u(p_2)-(6-6\lambda_{m+1}+\lambda_{m+1}^2)u(p_0)},
\end{equation}
where $p_2$ is the other vertex in the $m$-cell containing $E$.
There is a similar formula for $r^\tau$ with $m+1$ replaced by $m+l+1$ and $p_i$ replaced by $p_i^\tau$.

Analogous to (\ref{eq7}), there is also an algorithm which enables us to calculate the restriction of a Laplaician eigenfunctions to any edge in $\mathcal{SG}$. Let $p_{001}^\tau$ be the midpoint of the edge connecting $p_0^\tau$ and $p_{01}^\tau$. Then
\begin{eqnarray}\label{eq32}
u(p_{001}^\tau)&=&\frac{4-\lambda_{m+l+2}}{5-\lambda_{m+l+2}}u(p_{01}^\tau)\nonumber\\
&&+\big(\frac{3-\lambda_{m+l+2}}{(2-\lambda_{m+l+2})(5-\lambda_{m+l+2})}+\frac{1}{(2-\lambda_{m+l+2})(5-\lambda_{m+l+2})(5-\lambda_{m+l+1})}\big)u(p_{0}^\tau)\\\nonumber
&&-\big(\frac{1}{(2-\lambda_{m+l+2})(5-\lambda_{m+l+2})}+\frac{1}{(2-\lambda_{m+l+2})(5-\lambda_{m+l+2})(5-\lambda_{m+l+1})}\big)u(p_1^\tau).
\end{eqnarray}
See Algorithm 2.5 in \cite{DSV} for a proof of (\ref{eq32}) by using the extension rule (\ref{eq4}).

We will prove Theorem \ref{thm2} and Theorem \ref{thm3} in this section.

We need some lemmas.

\begin{lemma}\label{lemma31}
For an integer $l\geq 0$, we write $a_l=(2-\lambda_{m+l+2})(4-\lambda_{m+l+2})$, $b_l=3-\lambda_{m+l+2}$ and $c_l=(2-\lambda_{m+l+2})^2(5-\lambda_{m+l+2})-3$. Then
for any word $\tau$ with $|\tau|=l$, we have
\begin{equation}\label{eq31}
r^{\tau 0}=\frac{a_l+b_l r^\tau}{c_l-b_l r^\tau}\text{ and } r^{\tau 1}=\frac{c_lr^\tau-b_l}{a_l r^\tau+b_l}. 
\end{equation}
Furthermore, the two mappings in (\ref{eq31}) are both bijections: $\mathbb{R}\cup \{\infty\}\rightarrow \mathbb{R}\cup \{\infty\}$. In particular, $r^{\tau 0}=-1$, $r^{\tau 1}=\frac{c_l}{a_l}$ when $r^\tau=\infty$.
\end{lemma}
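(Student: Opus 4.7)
The plan is to establish the first identity in (\ref{eq31}) by direct substitution of the refinement formula (\ref{eq32}) into the definition of $r^{\tau 0}$, and to obtain the second identity by the same calculation applied to the sibling subedge $E_{\tau 1}$ (or, more economically, by the symmetry $r^\tau\leftrightarrow 1/r^\tau$ obtained from swapping the two endpoints of $E_\tau$).

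For the first step, the three distinguished vertices of $E_{\tau 0}\subset E_\tau$ are $p_0^{\tau 0}=p_0^\tau$, $p_1^{\tau 0}=p_{01}^\tau$, and $p_{01}^{\tau 0}=p_{001}^\tau$, so that
\begin{equation*}
r^{\tau 0}=\frac{u(p_{01}^\tau)-u(p_{001}^\tau)}{u(p_{001}^\tau)-u(p_0^\tau)}.
\end{equation*}
Substituting the expression for $u(p_{001}^\tau)$ from (\ref{eq32}), dividing numerator and denominator by $u(p_{01}^\tau)-u(p_0^\tau)$, and using the decimation relation $\lambda_{m+l+1}=\lambda_{m+l+2}(5-\lambda_{m+l+2})$ to eliminate $\lambda_{m+l+1}$ should collapse the coefficients into $a_l$, $b_l$, $c_l$. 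The degenerate case $r^\tau=\infty$ is handled by passing to the limit, yielding $r^{\tau 0}=-1$. As a sanity check, specializing to $\lambda_{m+l+2}=0$ recovers the harmonic values $a_l=8$, $b_l=3$, $c_l=17$ of Lemma \ref{lemma1}.

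For the bijection statement it is enough to show that the M\"obius determinant of each map is nonzero. For the first map this determinant equals $b_l(a_l+c_l)$, and a short computation gives $a_l+c_l=-(\lambda_{m+l+2}-5)(\lambda_{m+l+2}^2-5\lambda_{m+l+2}+5)$; by the decimation relation, the quadratic factor vanishes precisely when $\lambda_{m+l+1}=5$, which is excluded under the standing spectral-decimation hypotheses, together with $\lambda_{m+l+2}\neq 3,5$ so that $b_l\neq 0$. The map $r^{\tau 1}$ has a coefficient matrix of the same type, and the identical argument applies.

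The main obstacle I anticipate is the algebraic bookkeeping in the first step. The coefficients in (\ref{eq32}) are rather bulky rational functions of $\lambda_{m+l+1}$ and $\lambda_{m+l+2}$, and the compact polynomials $a_l$, $b_l$, $c_l$ emerge only when the decimation identity is applied at the right moment; a premature substitution leaves the final expression in a form that does not visibly match the claimed one.
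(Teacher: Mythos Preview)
Your proposal is correct and follows essentially the same route as the paper: derive the first identity from (\ref{eq32}) by direct substitution, obtain the second by the endpoint-swapping symmetry, and then verify that the M\"obius transformations are bijections. The paper's own proof is in fact terser than yours---it simply invokes (\ref{eq32}) and ``symmetric consideration,'' and for the bijection notes that $\lambda_{m+k}\in(0,2)\cup(3,5)\setminus\{4\}$ for $k\geq 2$ (since $\lambda_{m_0}\in\{2,5,6\}$) forces $a_l,b_l\neq 0$, then says the bijection ``can be directly checked.'' Your explicit computation of the M\"obius determinant $b_l(a_l+c_l)$ and its factorization $-(\lambda_{m+l+2}-5)(\lambda_{m+l+2}^2-5\lambda_{m+l+2}+5)$ is a cleaner justification than what the paper actually writes down; you may want to tie the nonvanishing back to the explicit range $\lambda_{m+l+2}\in(0,2)\cup(3,5)\setminus\{4\}$ rather than leave it as ``standing hypotheses.''
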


\begin{proof}
The proof is  similar to that of Lemma \ref{lemma1}. Noticing that $m\geq m_0$, we have $\lambda_{m+k}\in(0,2)\cup(3,5)\setminus\{4\}$ for any $k\geq 2$, since $\lambda_{m_0}=2,5$ or  $6$. Hence we always have $a_l,b_l\neq 0$ for any $l\geq 0$. By (\ref{eq32}) and the definition of $r^\tau$ and $r^{\tau0}$, we could get the first equality in (\ref{eq31}). The second equality in (\ref{eq31}) follows by a symmetric consideration. It can be directly checked that the two mappings are bijections. 
\end{proof}

\begin{lemma}\label{lemma32}
For any word $\tau$ with $|\tau|=l\geq 0$, we have

(1) if $r^\tau=4-\lambda_{m+l+1}$, then $r^{\tau 0}=4-\lambda_{m+l+2}$;

(2) if $r^\tau=\frac{1}{4-\lambda_{m+l+1}}$, then $r^{\tau 1}=\frac{1}{4-\lambda_{m+l+2}}$;

(3) if $r^\tau=-1$, then $r^{\tau 0}=\frac{1}{4-\lambda_{m+l+2}}$ and $r^{\tau 1}=4-\lambda_{m+l+2}$. 
\end{lemma}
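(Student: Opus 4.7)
The plan is to verify each of (1), (2), (3) by direct substitution into the formulas from Lemma \ref{lemma31}, using the spectral decimation identity (\ref{eq2}) to link $\lambda_{m+l+1}$ and $\lambda_{m+l+2}$. Concretely, writing $\mu:=\lambda_{m+l+2}$, the identity (\ref{eq2}) gives $\lambda_{m+l+1}=\mu(5-\mu)$ and hence
$$4-\lambda_{m+l+1}=(4-\mu)(1-\mu),$$
which is the algebraic bridge on which all three claims rest. Writing the coefficients $a_l,b_l,c_l$ from Lemma \ref{lemma31} in terms of $\mu$ will keep the computations manageable.

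For (1), I would substitute $r^\tau=(4-\mu)(1-\mu)$ into the formula for $r^{\tau 0}$ in (\ref{eq31}). I expect the numerator and denominator to share a common factor $\mu^2-5\mu+5$, with the quotient of the remaining factors equal to $4-\mu=4-\lambda_{m+l+2}$. Claim (2) will follow either by a parallel calculation or, more economically, by exploiting the visible symmetry between the two maps in (\ref{eq31}): replacing $r^\tau$ by $1/r^\tau$ interchanges the two formulas, so (2) is obtained by applying (1) to the reciprocal input.

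For (3), I would set $r^\tau=-1$ in both formulas of (\ref{eq31}). The numerator of $r^{\tau 0}$ reduces to $a_l-b_l=\mu^2-5\mu+5$, and the denominator $c_l+b_l=(2-\mu)^2(5-\mu)-\mu$ admits (since $\mu=4$ is clearly a root) the factorization $(4-\mu)(\mu^2-5\mu+5)$. Cancelling the common factor yields $r^{\tau 0}=1/(4-\mu)$. The $r^{\tau 1}$ calculation interchanges numerator and denominator up to sign, yielding $r^{\tau 1}=4-\mu$.

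The only non-routine point is to justify the cancellation of $\mu^2-5\mu+5$. Its roots are $\mu=(5\pm\sqrt{5})/2$, and in either case one checks $\mu(5-\mu)=5$, i.e., $\lambda_{m+l+1}=5$. This is precluded by the standing hypothesis (recorded in the proof of Lemma \ref{lemma31}) that $\lambda_{m+k}\in(0,2)\cup(3,5)\setminus\{4\}$ for $k\geq 2$, so the cancellation is always legitimate and the three identities hold unconditionally. Beyond this non-vanishing check, the proof is pure bookkeeping; I expect no conceptual obstacle.
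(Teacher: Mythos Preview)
Your proposal is correct and follows essentially the same route as the paper: both substitute directly into the formulas of Lemma~\ref{lemma31}, use the spectral decimation relation $\lambda_{m+l+1}=\lambda_{m+l+2}(5-\lambda_{m+l+2})$ to reduce everything to polynomials in $\mu=\lambda_{m+l+2}$, and derive (2) from (1) via the reciprocal symmetry $r^{\tau1}=\big(\frac{a_l+b_l(r^\tau)^{-1}}{c_l-b_l(r^\tau)^{-1}}\big)^{-1}$. Your explicit non-vanishing check for $\mu^2-5\mu+5$ is a welcome addition the paper omits; just note that for $l=0$ the cited range restriction only covers $k\geq 2$, so you should also observe (or invoke the extension rule~(\ref{eq4})) that $\lambda_{m+1}\neq 5$.
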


\begin{proof}
Notice that from $(\ref{eq2})$, $\lambda_{m+l+1}=\lambda_{m+l+2}(5-\lambda_{m+l+2})$. So if $r^\tau=4-\lambda_{m+l+1}$, then by Lemma \ref{lemma31},
\begin{eqnarray*}
r^{\tau 0}&=&\frac{(2-\lambda_{m+l+2})(4-\lambda_{m+l+2})+(3-\lambda_{m+l+2})(4-\lambda_{m+l+1})}{(2-\lambda_{m+l+2})^2(5-\lambda_{m+l+2})-3-(3-\lambda_{m+l+2})(4-\lambda_{m+l+1})}\\
&=&\frac{20-25\lambda_{m+l+2}+9\lambda_{m+l+2}^2-\lambda_{m+l+2}^3}{5-5\lambda_{m+l+2}+\lambda_{m+l+2}^2}\\
&=&4-\lambda_{m+l+2}.
\end{eqnarray*}

On the other hand, since $r^{\tau1}=\big(\frac{a_l+b_l (r^\tau)^{-1}}{c_l-b_l (r^\tau)^{-1}}\big)^{-1}$, we have $r^{\tau 1}=\frac{1}{4-\lambda_{m+l+2}}$ when $r^\tau=\frac{1}{4-\lambda_{m+l+1}}$.

If $r^\tau=-1$, then
\begin{eqnarray*}
r^{\tau 0}&=&\frac{(2-\lambda_{m+l+2})(4-\lambda_{m+l+2})-(3-\lambda_{m+l+2})}{(2-\lambda_{m+l+2})^2(5-\lambda_{m+l+2})-3+(3-\lambda_{m+l+2})}\\
&=&\frac{5-5\lambda_{m+l+2}+\lambda_{m+l+2}^2}{20-25\lambda_{m+l+2}+9\lambda_{m+l+2}^2-\lambda_{m+l+2}^3}\\
&=&\frac{1}{4-\lambda_{m+l+2}},
\end{eqnarray*}
and similarly, $r^{\tau 1}=4-\lambda_{m+l+2}$.
\end{proof}

Similar to Lemma \ref{lemma2}, we have
\begin{lemma}\label{lemma33}
For any word $\tau$, the restriction of $u$ to $E$ can not attain an extremum at $p_{01}^\tau$ if $r^\tau\neq -1$.
\end{lemma}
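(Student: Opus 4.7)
The plan is to adapt the symmetric/antisymmetric decomposition from the proof of Lemma \ref{lemma2} to the eigenfunction setting. By replacing $E$ with $E_\tau$ when necessary (which merely shifts the indices on $\lambda$), we may assume $\tau$ is empty and aim to prove that whenever $r \ne -1$, the function $u|_E$ has no extremum at $p_{01}$.

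First I would decompose $u|_E$ into its parts symmetric and antisymmetric about $p_{01}$, i.e.\ $u_S(p_0) = u_S(p_1) = \tfrac{1}{2}(u(p_0)+u(p_1))$, $u_S(p_{01}) = u(p_{01})$, and $u_A(p_0) = -u_A(p_1) = \tfrac{1}{2}(u(p_0)-u(p_1))$, $u_A(p_{01}) = 0$. The linearity of (\ref{eq32}) in the nodal values means that $u_S$ and $u_A$ each extend to all dyadic points of $E$ by the same recursion, with $u = u_S + u_A$ throughout. The hypothesis $r \ne -1$ makes both parts non-constant on $E$, with $r(u_S) = -1$ and $r(u_A) = 1$, and $u_A$ inherits the antisymmetry about $p_{01}$.

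Next I would track the two $r$-sequences. For $u_S$, Lemma \ref{lemma32}(3) gives $r^0(u_S) = \tfrac{1}{4-\lambda_{m+2}}$ and $r^1(u_S) = 4-\lambda_{m+2}$, and iterating parts (1)-(2) yields $r^{10^k}(u_S) = 4-\lambda_{m+k+2}$, $r^{01^k}(u_S) = \tfrac{1}{4-\lambda_{m+k+2}}$. Substituting into the telescoping products from the proof of Lemma \ref{lemma2}, each factor becomes $\tfrac{2}{|5-\lambda_{m+k+2}|}$; since $\lambda_n \to 0$ (all $\varepsilon_n = -1$ beyond the level of fixation), this is eventually bounded by $\tfrac{4}{5}$, so $\alpha^{E_1}(u_S) = \beta^{E_0}(u_S) = 0$. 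For $u_A$, the evolution is governed by the Möbius maps $f_l, g_l$ of Lemma \ref{lemma31}, whose coefficients $(a_l, b_l, c_l)$ converge to the harmonic values $(8,3,17)$; in the limit these become the maps of Remark \ref{re1}, with stable fixed points $\tfrac{2}{3}$ and $\tfrac{3}{2}$. A direct computation shows that $r^0(u_A)$ and $r^1(u_A)$ stay uniformly bounded away from the moving unstable fixed points $\tfrac{1}{4-\lambda_{m+2}}$ and $4-\lambda_{m+2}$ for every admissible $\lambda_{m+2}$, so the perturbed orbit is attracted to $\tfrac{2}{3}$ and $\tfrac{3}{2}$. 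Consequently each factor $\tfrac{2|r^{01^k}(u_A)|}{|1+r^{01^k}(u_A)|}$ tends to $\tfrac{6}{5} > 1$, forcing $\alpha^{E_1}(u_A) = \beta^{E_0}(u_A) = \infty$.

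Finally I would combine these with the antisymmetry of $u_A$: the points $p_0^{E_{01^l}}$ and $p_1^{E_{10^l}}$ are reflections of one another through $p_{01}$, so $u_A(p_0^{E_{01^l}}) = -u_A(p_1^{E_{10^l}})$. Because the normalized increments of $u_S$ vanish while those of $u_A$ diverge, for all sufficiently large $l$ the signs of $u(p_0^{E_{01^l}}) - u(p_{01})$ and $u(p_1^{E_{10^l}}) - u(p_{01})$ agree with those of the corresponding $u_A$-values, and are therefore opposite, ruling out an extremum of $u|_E$ at $p_{01}$. The main obstacle is the $u_A$ analysis above: the $r$-recursion is now a non-autonomous dynamical system, so one must verify that $r^0(u_A)$ avoids the moving unstable fixed point for every admissible $\lambda_{m+2}$ (using the exclusions $\lambda_{m+1} \ne 2, 5, 6$ from the spectral decimation proposition) and that the slowly varying maps $g_l \to g$ do not push the orbit onto the repelling manifold in finite time.
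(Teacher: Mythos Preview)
Your proposal is correct and follows essentially the same route as the paper: the symmetric/antisymmetric split of $u|_E$, the use of Lemma~\ref{lemma32} to obtain $r^{10^l}(u_S)=4-\lambda_{m+l+2}$ and $r^{01^l}(u_S)=\tfrac{1}{4-\lambda_{m+l+2}}$, the analysis of the non-autonomous M\"obius iteration for $u_A$ via its convergence to the harmonic maps of Remark~\ref{re1}, and the concluding sign comparison are all exactly what the paper does. For the one point you flag as the main obstacle, the paper's device is slightly cleaner than your ``uniformly bounded away'' claim: since $r(u_A)=1\neq 4-\lambda_{m+1}$ and each $f_l$ is a bijection carrying $4-\lambda_{m+l+1}$ to $4-\lambda_{m+l+2}$, the orbit $r^{10^l}(u_A)$ never equals $4-\lambda_{m+l+2}$, and because $\bigl|\tfrac{df_l}{dx}\bigr|$ exceeds $1$ at that moving point the orbit cannot converge to $4$ either, which together with $(a_l,b_l,c_l)\to(8,3,17)$ forces the limit $\tfrac{2}{3}$.
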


\begin{proof}
As in Lemma \ref{lemma2}, we only need to prove that $u$ can not attain an extremum at $p_{01}$ if $r\neq -1$. Suppose $r\neq -1$, then we divide the restriction of $u$ to $E$ into its symmetric part $u_S$ and antisymmetric part $u_A$. Then $r(u_S)=-1$ and $r(u_A)=1$.

 Since $r(u_S)=-1$, by Lemma \ref{lemma32}, we have $r^{1}(u_S)=4-\lambda_{m+2}$ and $r^{0}(u_S)=\frac{1}{4-\lambda_{m+2}}$, and then $r^{10^l}(u_S)=4-\lambda_{m+l+2}$ and $r^{01^l}(u_S)=\frac{1}{4-\lambda_{m+l+2}}$ for any $l\geq 0$. This gives that $\lim_{l\rightarrow\infty}r^{ 10^l}(u_S)=4$ and $\lim_{l\rightarrow\infty}r^{01^l}(u_S)=\frac{1}{4}$. 
 
 As for $u_A$, since $r(u_A)=1$, we have $r^1(u_A)\neq 4-\lambda_{m+2}$, then since the mapping $f_l(x)=\frac{a_{l+1}+b_{l+1}x}{c_{l+1}-b_{l+1}x}$ is bijective for any $l\geq 0$, we have $r^{10^l}(u_A)\neq 4-\lambda_{m+l+2}$ for any $l\geq 0$. Moreover, noticing that $|\frac{df_l}{dx}|_{4-\lambda_{m+l+2}}|>1$ for any $l\geq 0$, we could not have $\lim_{l\rightarrow\infty}r^{10^l}(u_A)=4$. On the other hand, since $\lim_{l\rightarrow\infty}a_l=8$, $\lim_{l\rightarrow\infty}b_l=3$, $\lim_{l\rightarrow\infty}c_l=17$, and $x=\frac{2}{3}$ is the unique stable fixed point of $f(x)=\frac{8+3x}{17-3x}$, we have $\lim_{l\rightarrow\infty}r^{10^l}(u_A)=\frac{2}{3}$. A similar argument yields that $\lim_{l\rightarrow\infty}r^{01^l}(u_A)=\frac{3}{2}$.

The remaining proof is  same as that of Lemma \ref{lemma2}.
\end{proof}

First we consider the case $m\geq m_1-1$. In this case, $\varepsilon_{m+l+1}=-1$ for any $l\geq 0$.

\begin{proof}[Proof of Theorem \ref{thm2}]
Since $m\geq m_1-1$, for any word $\tau$ with $|\tau|=l\geq 0$, we have $\lambda_{m+l+1}\leq \frac{5-\sqrt{5}}{2}$ and $\lambda_{m+l+2}\leq\frac{5-\sqrt{15+2\sqrt{5}}}{2}$. So it is easy to check that
\begin{equation*}
-1<-\frac{b_l}{a_l}<0<\frac{1}{4-\lambda_{m+l+1}}<4-\lambda_{m+l+1}<\frac{c_l}{b_l}
\end{equation*}
and
$a_l,b_l,c_l>0$. Combining this with Lemma \ref{lemma31} and Lemma \ref{lemma32}, we get that the graphs of $r^{\tau 0}$ and $r^{\tau 1}$ in terms of $r^\tau$ look like what Fig. 4 presents.

\begin{figure}[h]
\begin{center}
\includegraphics[width=11cm]{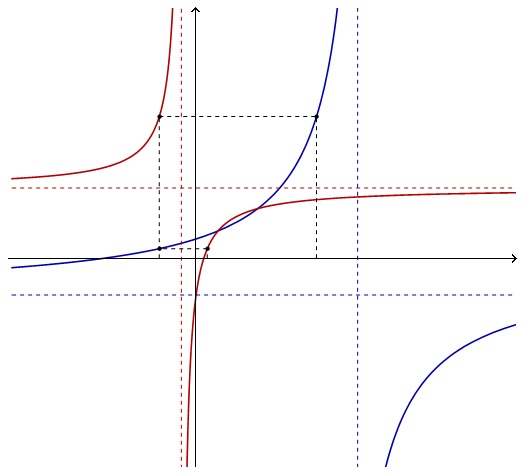}
\begin{center}
\begin{picture}(0,0) \thicklines
\put(154,146){$r^\tau$}
\put(124,176){$r^{\tau1}$}
\put(134,92){$r^{\tau0}$}
\put(28,280){$r^{\tau0}$}
\put(-71,280){$r^{\tau1}$}
\put(-70,141){\tiny$-1$}\put(-59,140){\tiny$-\frac{b_l}{a_l}$}\put(-38,140){\tiny$\frac{1}{4-\lambda_{m+l+1}}$}\put(10,140){\tiny$4-\lambda_{m+l+1}$}\put(59,140){\tiny$\frac{c_l}{b_l}$}
\put(-40,197){\tiny$\frac{c_l}{a_l}$}\put(-40,118){\tiny$-1$}\put(-40,236){\tiny$4-\lambda_{m+l+2}$}\put(-100,160){\tiny$\frac{1}{4-\lambda_{m+l+2}}$}
\end{picture}
\textbf{\\Figure 4. Graphs of $r^{\tau0}$ and $r^{\tau1}$ of $r^\tau$}
\end{center}\end{center}
\end{figure}

The proof is  similar to that of Theorem \ref{thm1}. If $r\in[\frac{1}{4-\lambda_{m+1}}, 4-\lambda_{m+1}]$, by iteration, we will have $ r^\tau\in[\frac{1}{4-\lambda_{m+l+1}}, 4-\lambda_{m+l+1}]$ for any word $\tau$ with $|\tau|=l\geq 0$. Thus in this case $u$ is strictly monotone along the line segment $E$. Otherwise if $r\notin[\frac{1}{4-\lambda_{m+1}}, 4-\lambda_{m+1}]$, then there must exist some $\tau$ such that $r^\tau<0$ and there is a binary searching algorithm so that $u$ has exactly one extremum along $E$.

As for the boundary behavior of $u$ along $E$, we need to consider $\lim_{l\rightarrow\infty}r^{0^l}$ and $\lim_{l\rightarrow\infty}r^{1^l}$. Similar to the proof of Lemma \ref{lemma33}, if $r=4-\lambda_{m+1}$, then we have
\begin{equation*}
\lim_{l\rightarrow\infty}r^{0^l}=\lim_{l\rightarrow\infty}(4-\lambda_{m+l+1})=4,
\end{equation*} 
which leads to 
\begin{eqnarray*}
\alpha&=&\lim_{l\rightarrow\infty}\frac{|u(p_1^{0^l})-u(p_0)|}{|p_1^{0^l}-p_0|}\\
&=&\frac{|u(p_1)-u(p_0)|}{|p_1-p_0|}\lim_{l\rightarrow\infty}2^l\frac{1}{1+r}\cdot\frac{1}{1+r^0}\cdot\cdots\cdot\frac{1}{1+r^{0^{l-1}}}=0,
\end{eqnarray*}
 where $\alpha=\alpha^E(u)$ for short. Otherwise if $r\neq 4-\lambda_{m+1}$, then  $\lim_{l\rightarrow\infty}r^{0^l}=\frac{2}{3}$, which leads to that $\alpha=\lim_{l\rightarrow\infty}\frac{|u(p_1^{0^l})-u(p_0)|}{|p_1^{0^l}-p_0|}=\infty$. There is a similar argument for $\beta=\beta^E(u)$ by considering $\lim_{l\rightarrow\infty}r^{1^l}$ instead.
 
 Thus we have proved the theorem.
 \end{proof}

Similar to the case for harmonic functions, we have a binary searching algorithm for the unique extreme point of $u|_E$ in case of $r\notin[\frac{1}{4-\lambda_{m+1}}, 4-\lambda_{m+1}]$.

\begin{remark}
When $r\notin[\frac{1}{4-\lambda_{m+1}}, 4-\lambda_{m+1}]$, then $p_0+\theta(r)p_1$ is the unique extreme point of $u|_E$, where $\theta(r)$ is a function of $r$ defined recursively as
 \begin{equation*}
   \theta(r^\tau)=
      \begin{cases}
     \frac{1}{2},\quad &\text{if } r^\tau=-1,\\
  \frac{1}{2}+\frac{1}{2}\theta(r^{\tau1}), \quad &\text{if } r^\tau\in(-1,\frac{1}{4-\lambda_{m+|\tau|+1}}),\\
      \frac{1}{2}\theta(r^{\tau0}),  &\text{otherwise},\\
      \end{cases}
      \end{equation*}
      for any word $\tau$.
\end{remark}

\begin{corollary}\label{cor31}
Assume $m\geq m_1-1$. Let $\tau$ be a word with $|\tau|=l\geq 0$. Then

(1) $p_{01}^\tau$ is an extreme point of $u$ along $E$ if and only if $r^\tau=-1$;

(2) if $\tau\neq 0^l$ (write $\tau=\tau'10^k$ with $k\geq 0$), then $p_0^\tau=p_{01}^{\tau'}=p_1^{\tau'01^k}$ is an extreme point of $u$ along $E$ if and only if $r^\tau=4-\lambda_{m+l+1}$,  and if so, $r^{\tau'}=-1$ and $r^{\tau'01^k}=\frac{1}{4-\lambda_{m+l+1}}$;

(3) if $\tau\neq 1^l$ (write $\tau=\tau'01^k$ with $k\geq 0$), then $p_1^\tau=p_{01}^{\tau'}=p_0^{\tau'10^k}$ is an extreme point of $u$ along $E$ if and only if $r^\tau=\frac{1}{4-\lambda_{m+l+1}}$,  and if so, $r^{\tau'}=-1$ and $r^{\tau'10^k}=4-\lambda_{m+l+1}$.
\end{corollary}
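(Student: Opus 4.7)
The plan is to reduce everything to part (1) and then derive (2) and (3) from (1) by iterating Lemma \ref{lemma32} and exploiting the bijectivity asserted in Lemma \ref{lemma31}.

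First I would dispatch part (1). The ``only if'' direction is immediate from Lemma \ref{lemma33}. For the converse, assume $r^\tau=-1$ and apply Lemma \ref{lemma32}(3) to obtain $r^{\tau 0}=\frac{1}{4-\lambda_{m+l+2}}$ and $r^{\tau 1}=4-\lambda_{m+l+2}$. Both values lie in the closed interval $[\frac{1}{4-\lambda_{m+l+2}},\,4-\lambda_{m+l+2}]$, so Theorem \ref{thm2}(1) applied to the two $(m+l+1)$-edges $E_{\tau 0}$ and $E_{\tau 1}$ shows that $u$ is strictly monotone on each. Moreover $r^\tau=-1$ forces $u(p_0^\tau)=u(p_1^\tau)\neq u(p_{01}^\tau)$, so the two strictly monotone branches must traverse opposite directions emanating from $p_{01}^\tau$; this forces $u(p_{01}^\tau)$ to be either strictly greater than or strictly less than every other value of $u$ on $E_\tau$, yielding a strict local extremum at $p_{01}^\tau$.

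For parts (2) and (3), the key observation is that the points involved are internal midpoints in disguise: when $\tau=\tau'10^k$, the contraction $\phi_0^{E_{\tau'1}}$ fixes its fixed point $p_0^{\tau'1}=p_{01}^{\tau'}$, and iteration gives $p_0^\tau=p_{01}^{\tau'}$; symmetrically $p_1^{\tau'01^k}=p_{01}^{\tau'}$. Part (1) applied to $\tau'$ therefore says the point is an extremum of $u|_E$ if and only if $r^{\tau'}=-1$. The remaining task is to translate this equivalence into the stated numerical conditions on $r^\tau$ and $r^{\tau'01^k}$. For the forward direction I would start from $r^{\tau'}=-1$, invoke Lemma \ref{lemma32}(3) to obtain $r^{\tau'1}=4-\lambda_{m+|\tau'|+2}$ and $r^{\tau'0}=\frac{1}{4-\lambda_{m+|\tau'|+2}}$, and then iterate Lemma \ref{lemma32}(1) a total of $k$ times along the suffix $0^k$ (respectively Lemma \ref{lemma32}(2) along $1^k$), producing $r^\tau=4-\lambda_{m+l+1}$ and $r^{\tau'01^k}=\frac{1}{4-\lambda_{m+l+1}}$. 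The converse runs the same iteration in reverse: because every mapping in Lemma \ref{lemma31} is a bijection on $\mathbb{R}\cup\{\infty\}$, each of the special values $4-\lambda_{\cdot}$ has a unique preimage under the recursion, and Lemma \ref{lemma32} identifies that preimage as exactly the one required. Part (3) is handled identically with the roles of $0$ and $1$ swapped.

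I do not anticipate a genuine obstacle; the argument is essentially a bookkeeping assembly of Theorem \ref{thm2}, Lemma \ref{lemma32}, and Lemma \ref{lemma33}. The most delicate point is the ``if'' direction of part (1), where one must rule out a plateau rather than a strict extremum at $p_{01}^\tau$; this follows because the strict monotonicity on each half-edge excludes constancy on a neighborhood, so the value $u(p_{01}^\tau)$ is attained only at that single point within $E_\tau$.
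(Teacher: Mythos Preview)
Your proposal is correct and follows essentially the same route as the paper: part (1) is obtained from Lemma \ref{lemma33} together with Lemma \ref{lemma32}(3) and Theorem \ref{thm2}, and parts (2)--(3) are reduced to (1) via the chain of equivalences $r^\tau=4-\lambda_{m+l+1}\Longleftrightarrow r^{\tau'}=-1\Longleftrightarrow r^{\tau'01^k}=\frac{1}{4-\lambda_{m+l+1}}$ established from Lemmas \ref{lemma31} and \ref{lemma32}. Your write-up is in fact a bit more explicit than the paper's in justifying the converse direction via bijectivity and in ruling out a plateau at $p_{01}^\tau$.
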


\begin{proof}
For (1), if $r^\tau=-1$, then from Lemma \ref{lemma32}, we have 
$r^{\tau0}=\frac{1}{4-\lambda_{m+l+2}}$
 and 
$r^{\tau1}={4-\lambda_{m+l+2}}$. Thus from Theorem \ref{thm2}, both $u|_{E_{\tau 0}}$ and $u|_{E_{\tau1}}$ are monotone. This gives that $p_{01}^\tau$ is an extreme point of $u$ since $r^\tau=-1$. Combing this with Lemma \ref{lemma33}, we have proved (1).

For (2), by Lemma \ref{lemma31} and Lemma \ref{lemma32}, it is easy to check that 
\begin{equation*}
r^\tau=4-\lambda_{m+l+1}\Longleftrightarrow r^{\tau'}=-1\Longleftrightarrow r^{\tau'01^k}=\frac{1}{4-\lambda_{m+l+1}}.\end{equation*}
Then (2) follows from (1).

The proof of (3) is similar to that of (2).
\end{proof}

Now we turn to the general case $m\geq m_0$. In particular, we mainly focus on the case that $m_0\leq m<m_1-1$ (so we naturally assume $m_0<m_1-1$). Write $\{\varepsilon_{m_0+i}\}_{i\geq 1}$ the associated $\varepsilon$-sequence of $u$ and $\lambda$. It is easy to check that  $\varepsilon_{m_1-1}=1$. From now on, we always denote $\tilde{u}$ a new eigenfunction such that $\tilde{u}|_{V_m}=u|_{V_m}$ whose associated $\varepsilon$-sequence $\{\tilde{\varepsilon}_{m_0+i}\}_{i\geq 1}$ satisfies
  \begin{equation*}
  \tilde{\varepsilon}_{m_0+i}=
      \begin{cases}
     \varepsilon_{m_0+i},\quad &\text{if } i\neq m_1-m_0-1,\\
  -1, \quad &\text{if } i=m_1-m_0-1.\\
      \end{cases}
      \end{equation*}
      We write $\{\tilde{\lambda}_{m_0+i}\}_{i\geq 0}$ and $\tilde{\lambda}$ the associated graph eigenvalues and eigenvalue of $\tilde{u}$. Of cause, $\lambda_{m_0+i}=\tilde{\lambda}_{m_0+i}$ for $i\leq m_1-m_0-2$.
     We additional require $m>m_0$ if $\lambda_{m_0}=6$ to avoid the possibility that $m_1-1=m_0+1$ since we could not allow $\tilde{\varepsilon}_{m_0+1}=-1$ and $\tilde{\lambda}_{m_0+1}=2$.
     
     We write $\tilde{r}$, $\tilde{r}^\tau$, $\tilde{a}_l$, $\tilde{b}_l$, $\tilde{c}_l$ of $\tilde{u}$ on $E$, analogous to those of $u$ on $E$. Then we have the following lemma.
     
\begin{lemma}\label{lemma34}
Assume $m_0\leq m<m_1-1$ ($m_0<m<m_1-1$ if $\lambda_{m_0}=6$). Then for any $(m_1-1)$-edge $E_\tau$ contained in $E$, we have

(1) if $\tilde{r}^\tau\in(\frac{1}{4-\tilde{\lambda}_{m_1}}, 4-\tilde{\lambda}_{m_1})$, then $r^\tau\notin[\frac{1}{4-\lambda_{m_1}}, 4-\lambda_{m_1}]$;

(2) if $\tilde{r}^\tau=\frac{1}{4-\tilde{\lambda}_{m_1}}$, then $r^\tau=\frac{1}{4-\lambda_{m_1}}$; if $\tilde{r}^\tau=4-\tilde{\lambda}_{m_1}$, then $r^\tau=4-\lambda_{m_1}$;

(3) if $\tilde{r}^\tau\notin[\frac{1}{4-\tilde{\lambda}_{m_1}}, 4-\tilde{\lambda}_{m_1}]$, then $r^\tau\in(\frac{1}{4-\lambda_{m_1}}, 4-\lambda_{m_1})$.
\end{lemma}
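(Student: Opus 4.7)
The plan is to relate $r^\tau$ and $\tilde r^\tau$ through the ratio $r^{\tau'}$ at the containing $(m_1-2)$-edge $E_{\tau'}$ (where $\tau=\tau'\nu_0$ for some $\nu_0\in\{0,1\}$). First I will observe that since the $\varepsilon$-sequences of $u$ and $\tilde u$ agree up to index $m_1-m_0-2$, iterating the extension formula (\ref{eq4}) from the common data on $V_m$ gives $u=\tilde u$ on $V_{m_1-2}$. At the next level, $\varepsilon_{m_1-1}=1$ but $\tilde\varepsilon_{m_1-1}=-1$, so writing $\mu=\lambda_{m_1-1}$ and $\tilde\mu=\tilde\lambda_{m_1-1}$, these are the two distinct roots of $X(5-X)=\lambda_{m_1-2}$ and satisfy $\tilde\mu=5-\mu$. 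A short induction on the spectral decimation iteration starting from $\lambda_{m_0}\in\{2,5,6\}$ shows the reachable $\lambda_m$ never lie in $(2,3)$; together with the lemma's extra $6$-series hypothesis (which rules out $\mu=3$ by excluding $m_1=m_0+2$), this forces $\mu\in(3,5)\setminus\{4\}$. Write $\nu=\lambda_{m_1}$ and $\tilde\nu=\tilde\lambda_{m_1}$.

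Formula (\ref{r}) applied at the $(m_1-2)$-edge level expresses both $r^{\tau'}$ and $\tilde r^{\tau'}$ as ratios of linear combinations of the common vertex values $A=u(p_0^{\tau'}), B=u(p_1^{\tau'}), C=u(p_2^{\tau'})$, using $\mu$ for $u$ and $\tilde\mu=5-\mu$ for $\tilde u$. Matching the coefficients of $A,B,C$ in the resulting numerators and denominators yields the M\"obius relation
\begin{equation*}
\tilde r^{\tau'}=\phi(r^{\tau'}),\qquad \phi(x)=\frac{px+q}{qx+p},
\end{equation*}
where $p=(\mu^2-5\mu+5)/((2-\mu)(5-\mu))$ and $q=(2\mu-5)/((2-\mu)(5-\mu))$. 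The symmetric form of $\phi$ immediately fixes $\pm1$; direct substitution further gives $\phi(4-\mu)=4-\tilde\mu$ and $\phi(1/(4-\mu))=1/(4-\tilde\mu)$. Combined with Lemma \ref{lemma32} (and its analog for $\tilde u$), which under $\psi_{\nu_0}$ sends the triple $\{-1,1/(4-\mu),4-\mu\}$ to cover the special values $\{1/(4-\nu),4-\nu\}$ for $r^\tau$ (with correspondence depending on $\nu_0$), this gives part (2) of the lemma.

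For parts (1) and (3) I take $\nu_0=0$; the case $\nu_0=1$ is symmetric via Lemma \ref{lemma31}. A monotonicity analysis of $\psi_0$, analogous to the harmonic-case argument in Theorem \ref{thm1} and valid because $\nu\in(0,5/2)$ places both $-1$ and $4-\mu$ on the same monotone branch of $\psi_0$, yields $\psi_0^{-1}([1/(4-\nu),4-\nu])=[-1,4-\mu]$ as a closed real arc; the analogous statement holds for $\tilde\psi_0$ with $\tilde\nu,\tilde\mu$. The crux is then to prove that $\phi^{-1}([-1,4-\tilde\mu])$ is the \emph{complementary} arc $(\mathbb{R}\cup\{\infty\})\setminus(-1,4-\mu)$, or equivalently, that the pole $-p/q=(\mu^2-5\mu+5)/(5-2\mu)$ of $\phi$ lies in the open interval $(-1,4-\mu)$: the inequality $-1<-p/q$ reduces to $(2-\mu)(5-\mu)<0$ (true for $\mu\in(2,5)$), and $-p/q<4-\mu$ reduces to $(\mu-3)(\mu-5)<0$ (true for $\mu\in(3,5)$). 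Hence in $r^{\tau'}$-space the preimages under $\psi_0$ and $\tilde\psi_0\circ\phi$ of the two closed nice intervals partition $\mathbb{R}\cup\{\infty\}$ into complementary arcs sharing only the endpoints $\{-1,4-\mu\}$, which translates into the required inside-outside dichotomy for $(r^\tau,\tilde r^\tau)$ at the $(m_1-1)$-edge level.

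The main obstacle is the explicit M\"obius derivation of $\phi$ (a somewhat lengthy polynomial coefficient-matching in $\mu$) together with the pole-location inequality; both ultimately hinge on the structural constraint $\mu\in(3,5)\setminus\{4\}$ extracted from the spectral decimation iteration starting from $\{2,5,6\}$.
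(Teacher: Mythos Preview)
Your proof is correct and takes a genuinely different route from the paper's. Both arguments reduce to analyzing an $(m_1-2)$-edge $E_{\tau'}$, but the paper proceeds by introducing a \emph{virtual $(m-1)$-cell} above $E_{\tau'}$ and an auxiliary parameter $r^*$, so that $r^{\tau'}$ and $\tilde r^{\tau'}$ become separate M\"obius functions of the common $r^*$; the resulting picture (eight sub-graphs) is then read off case by case, and since the virtual-cell construction requires a level below $m_1-2$, the paper must treat the boundary case $m=m_0$ (with $\lambda_{m_0}=2$ or $5$) by separate direct computation. You instead compose the two M\"obius maps to obtain $\tilde r^{\tau'}=\phi(r^{\tau'})$ directly from formula~(\ref{r}) and reduce the inside/outside dichotomy to a single pole-location inequality for $\phi$. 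This eliminates the virtual cell and handles all starting levels $m\ge m_0$ uniformly, at the price of the explicit coefficient-matching needed to identify $p,q$. The paper's argument is more pictorial; yours is more algebraic, shorter, and makes the structural role of the constraint $\mu\in(3,5)\setminus\{4\}$ transparent.

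One small point: the claim $\psi_0^{-1}([\tfrac{1}{4-\nu},4-\nu])=[-1,4-\mu]$ requires the pole $c_l/b_l$ to lie outside $[-1,4-\mu]$, and ``$\nu\in(0,5/2)$'' alone does not quite give this; the needed inequality is $\nu^2-5\nu+5>0$, i.e.\ $\nu<\tfrac{5-\sqrt5}{2}$, which does however follow from your range for $\mu$ via $\nu=\tfrac{5-\sqrt{25-4\mu}}{2}$.
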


\begin{proof}
Without loss of generality, we assume $m=m_1-2$, i.e., $E$ is a $(m_1-2)$-edge, otherwise we just need to consider every $(m_1-2)$-edge contained in $E$ instead of $E$ separately. Then $|\tau|=1$, i.e., $\tau=0$ or $1$. From the definition of $\tilde{u}$, since $E$ is a $(m_1-2)$-edge, we have $\tilde{u}(p_i)=u(p_i)$, $i=0,1,2$, and $\tilde{\lambda}_{m_0+i}=\lambda_{m_0+i}$ for $0\leq i\leq m_1-m_0-2$. 

First we assume that  $m_0<m(=m_1-2)$.  We make a virtual $(m-1)$-cell such that $E$ is a half of one of its edge, as shown in Fig  5. Let $p_0$, $q_1$, $q_2$ be its boundary vertices, $p_1$ be the midpoint of the $(m-1)$-edge joining $p_0$ and $q_1$. 

\begin{figure}[h]
\begin{center}
\includegraphics[width=6.5cm]{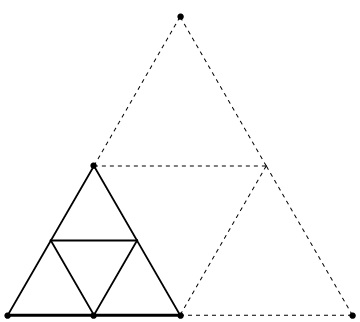}
\begin{center}
\begin{picture}(0,0) \thicklines
\put(-96,14){$p_0$}\put(-3,14){$p_1$}\put(-49,14){$p_{01}$}
\put(90,14){$q_1$}\put(-3,182){$q_2$}\put(-52,104){$p_2$}\put(-53,2){$E$}
%\put(95.6,72.4){$q_1$}
%\put(64.6,5.4){$(b)$}
\end{picture}
\textbf{\\Figure 5. A virtual $(m-1)$-cell containing $E$}
\end{center}\end{center}
\end{figure}

Comparing to (\ref{eq4}), we choose the values of $u$ at the virtual vertices $q_i$ to be
\begin{equation*}
u(q_i)=-\frac{4-\lambda_{m}}{6-\lambda_{m}}u(p_0)+\frac{(4-\lambda_{m})(5-\lambda_{m})}{6-\lambda_{m}}u(p_i)-\frac{2(5-\lambda_{m})}{6-\lambda_{m}}u(p_j),
\end{equation*}
for distinct $i,j\in\{1,2\}$,
in order the extension relation
\begin{equation*}
u(p_i)=\frac{(4-\lambda_{m})\big(u(p_0)+u(q_i)\big)+2u(q_j)}{(2-\lambda_{m})(5-\lambda_{m})}
\end{equation*}
to hold. Then if we define $r^{*}=\frac{u(q_1)-u(p_1)}{u(p_1)-u(p_0)}$, by Lemma \ref{lemma31}, we have
\begin{equation*}
r=\frac{a_{-1}+b_{-1}r^{*}}{c_{-1}-b_{-1}r^{*}} \text{ and }\tilde{r}=\frac{\tilde{a}_{-1}+\tilde{b}_{-1}r^{*}}{\tilde{c}_{-1}-\tilde{b}_{-1}r^{*}} 
\end{equation*} where $a_{-1}, b_{-1}, c_{-1}$ are functions of $\lambda_{m+1}$, and $\tilde{a}_{-1}, \tilde{b}_{-1}, \tilde{c}_{-1}$ are functions of $\tilde{\lambda}_{m+1}$, which are same as those in Lemma \ref{lemma31} with $l=-1$ and $m=m_1-2$.

Analogously, $r^0=\frac{a_0+b_0r}{c_0-b_0r}$, $\tilde{r}^0=\frac{\tilde{a}_0+\tilde{b}_0\tilde{r}}{\tilde{c}_0-\tilde{b}_0\tilde{r}}$, $r^1=\frac{c_0r-b_0}{a_0r+b_0}$, $\tilde{r}^1=\frac{\tilde{c}_0\tilde{r}-\tilde{b}_0}{\tilde{a}_0\tilde{r}+\tilde{b}_0}$, where $a_0,b_0,c_0,\tilde{a}_0,\tilde{b}_0,\tilde{c}_0$ are functions of $\lambda_{m+2}$ or $\tilde{\lambda}_{m+2}$.

Since $m=m_1-2>m_0$, it is easy to check that $\lambda_{m}\in(0,\frac{5-\sqrt{5}}{2}]\cup[\frac{5+\sqrt{5}}{2},5)\cup\{3\}\setminus\{1,4\}$, and then $\lambda_{m+1}=\frac{5+\sqrt{25-4\lambda_m}}{2}\in (\frac{5+\sqrt{5}}{2},5)\setminus\{1,4\}$, $\tilde{\lambda}_{m+1}=\frac{5-\sqrt{25-4\lambda_m}}{2}\in (0,\frac{5-\sqrt{5}}{2})\setminus\{1,4\}$, and similarly $\lambda_{m+2},\tilde{\lambda}_{m+2}\in(0,\frac{5-\sqrt{5}}{2})\setminus\{1,4\}$.

Then we have $\tilde{b}_{-1}>0$, $-1<4-\lambda_{m}<\frac{\tilde{c}_{-1}}{\tilde{b}_{-1}}$, so the graph of  $\tilde{r}=\frac{\tilde{a}_{-1}+\tilde{b}_{-1}r^*}{\tilde{c}_{-1}-\tilde{b}_{-1}r^*}$ looks like what Fig. 6-1 presents. Similarly, we have $b_{-1}<0$, $\frac{c_{-1}}{b_{-1}}<-1<4-\lambda_m$ when $\lambda_{m+1}<4$ and $-1<\frac{c_{-1}}{b_{-1}}<4-\lambda_{m}$ when $\lambda_{m+1}>4$, see Fig. 6-2, 6-3 for the two possible cases of graphs of $r=\frac{a_{-1}+b_{-1}r^*}{c_{-1}-b_{-1}r^*}$.

For $\tilde{r}^0$, we have $\tilde{b}_0>0$ and $-1<4-\tilde{\lambda}_{m+1}<\frac{\tilde{c}_0}{\tilde{b}_0}$, see Fig. 6-4 for the graph of $\tilde{r}^0=\frac{\tilde{a}_0+\tilde{b}_0\tilde{r}}{\tilde{c}_0-\tilde{b}_0\tilde{r}}$. Similarly for $r^0$, we have $b_0>0$ and $-1<4-\lambda_{m+1}<\frac{c_0}{b_0}$, see Fig. 6-5 for the graph of $r^0=\frac{a_0+b_0r}{c_0-b_0r}$.

For $\tilde{r}^1$, we have $\tilde{b}_0>0$ and $-1<-\frac{\tilde{b}_0}{\tilde{a}_0}<\frac{1}{4-\tilde{\lambda}_{m+1}}$, see Fig. 6-6 for the graph of $\tilde{r}^1=\frac{\tilde{c}_0\tilde{r}-\tilde{b}_0}{\tilde{a}_0\tilde{r}+\tilde{b}_0}$. Similarly for $r^1$, we have $b_0>0$ and $-1<-\frac{a_0}{b_0}<\frac{1}{4-\lambda_{m+1}}$ when $\lambda_{m+1}<4$, $\frac{1}{4-\lambda_{m+1}}<-1<-\frac{b_0}{a_0}$ when $\lambda_{m+1}>4$, see Fig. 6-7, 6-8 for the two possible graphs of $r^1=\frac{c_0r-b_0}{a_0r+b_0}$.

\begin{figure}[h]
\begin{center}
\includegraphics[width=4.5cm]{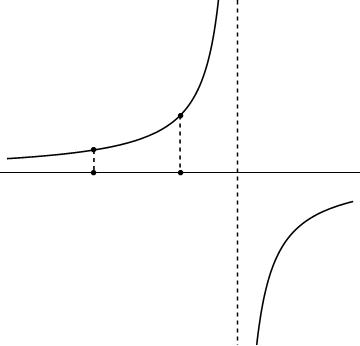}\hspace{2cm}
\includegraphics[width=4.5cm]{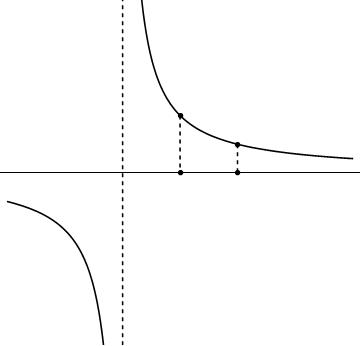}
\includegraphics[width=4.5cm]{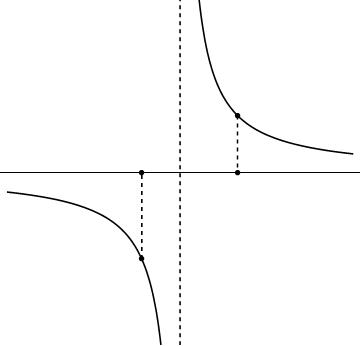}\hspace{2cm}
\includegraphics[width=4.5cm]{figure6a.jpg}
\includegraphics[width=4.5cm]{figure6a.jpg}\hspace{2cm}
\includegraphics[width=4.5cm]{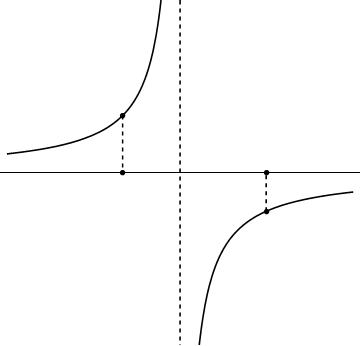}
\includegraphics[width=4.5cm]{figure6d.jpg}\hspace{2cm}
\includegraphics[width=4.5cm]{figure6a.jpg}
\begin{center}
\begin{picture}(0,0) \thicklines
\put(-175,451){\Tiny$(-1)$}
\put(-131,447){\Tiny$-1$}
\put(-141,470){\Tiny$\frac{1}{4-\tilde{\lambda}_{m+1}}$}
\put(-104,447){\Tiny$4-\lambda_{m}$}
\put(-109,477){\Tiny$4-\tilde{\lambda}_{m+1}$}
\put(-73,444){\Tiny$\frac{\tilde{c}_{-1}}{\tilde{b}_{-1}}$}
\put(-87,507){\Tiny$\tilde{r}$}
\put(-30,452){\Tiny$r^*$}
\put(-40,500){\Small 6-1}

\put(14,451){\Tiny$(-1)$}
\put(60,445){\Tiny$\frac{{c}_{-1}}{{b}_{-1}}$}
\put(89,447){\Tiny$-1$}
\put(80,480){\Tiny$\frac{1}{4-{\lambda}_{m+1}}$}
\put(106,447){\Tiny$4-\lambda_{m}$}
\put(107,467){\Tiny$4-{\lambda}_{m+1}$}
\put(83,507){\Tiny${r}$}
\put(160,452){\Tiny$r^*$}
\put(140,500){\Small 6-2}
\put(120,489){\Tiny$(\lambda_{m+1}<4)$}

\put(-175,327){\Tiny$(-1)$}
\put(-120,323){\Tiny$-1$}
\put(-126,293){\Tiny$\frac{1}{4-{\lambda}_{m+1}}$}
\put(-95,321){\Tiny$\frac{{c}_{-1}}{{b}_{-1}}$}
\put(-80,323){\Tiny$4-\lambda_{m}$}
\put(-84,353){\Tiny$4-{\lambda}_{m+1}$}
\put(-30,327){\Tiny$r^*$}
\put(-85,380){\Tiny$r$}
\put(-40,370){\Small 6-3}
\put(-60,359){\Tiny$(\lambda_{m+1}>4)$}

\put(14,327){\Tiny$(-1)$}
\put(58,323){\Tiny$-1$}
\put(48,346){\Tiny$\frac{1}{4-\tilde{\lambda}_{m+2}}$}
\put(81,322){\Tiny$4-\tilde{\lambda}_{m+1}$}
\put(80,353){\Tiny$4-\tilde{\lambda}_{m+2}$}
\put(116,321){\Tiny$\frac{\tilde{c}_{0}}{\tilde{b}_{0}}$}
\put(100,383){\Tiny$\tilde{r}^0$}
\put(159,328){\Tiny$\tilde{r}$}
\put(149,376){\Small 6-4}

\put(-175,203){\Tiny$(-1)$}
\put(-131,199){\Tiny$-1$}
\put(-141,221){\Tiny$\frac{1}{4-{\lambda}_{m+2}}$}
\put(-108,199){\Tiny$4-{\lambda}_{m+1}$}
\put(-109,229){\Tiny$4-{\lambda}_{m+2}$}
\put(-73,199){\Tiny$\frac{{c}_{0}}{{b}_{0}}$}
\put(-89,258){\Tiny${r}^0$}
\put(-30,204){\Tiny${r}$}
\put(-40,252){\Small 6-5}

\put(14,203){\Tiny$(\frac{\tilde{c}_0}{\tilde{a}_0})$}
\put(68,199){\Tiny$-1$}
\put(58,230){\Tiny${4-\tilde{\lambda}_{m+2}}$}
\put(110,213){\Tiny$\frac{1}{4-\tilde{\lambda}_{m+1}}$}
\put(115,184){\Tiny$\frac{1}{4-\tilde{\lambda}_{m+2}}$}
\put(95,196){\Tiny$-\frac{\tilde{b}_{0}}{\tilde{a}_{0}}$}
\put(79,257){\Tiny$\tilde{r}^1$}
\put(159,203){\Tiny$\tilde{r}$}
\put(149,252){\Small 6-6}

\put(-175,80){\Tiny$(\frac{{c}_0}{{a}_0})$}
\put(-121,75){\Tiny$-1$}
\put(-131,105){\Tiny${4-{\lambda}_{m+2}}$}
\put(-79,88){\Tiny$\frac{1}{4-{\lambda}_{m+1}}$}
\put(-71,63){\Tiny$\frac{1}{4-{\lambda}_{m+2}}$}
\put(-94,73){\Tiny$-\frac{{b}_{0}}{{a}_{0}}$}
\put(-110,134){\Tiny${r}^1$}
\put(-30,80){\Tiny${r}$}
\put(-40,128){\Small 6-7}
\put(-60,117){\Tiny$(\lambda_{m+1}<4)$}

\put(14,80){\Tiny$(\frac{{c}_0}{{a}_0})$}
\put(89,75){\Tiny$-1$}
\put(48,97){\Tiny$\frac{1}{4-{\lambda}_{m+2}}$}
\put(50,74){\Tiny$\frac{1}{4-{\lambda}_{m+1}}$}
\put(79,104){\Tiny$4-{\lambda}_{m+2}$}
\put(116,74){\Tiny$-\frac{{b}_{0}}{{a}_{0}}$}
\put(100,134){\Tiny${r}^1$}
\put(159,80){\Tiny${r}$}
\put(149,128){\Small 6-8}
\put(129,117){\Tiny$(\lambda_{m+1}>4)$}
%\put(95.6,72.4){$q_1$}
%\put(64.6,5.4){$(b)$}
\end{picture}
\textbf{\\Figure 6. Graphs of $r, \tilde{r}, r^0, \tilde{r}^0, r^1, \tilde{r}^1$}
\end{center}\end{center}
\end{figure}

Looking at the graphs in Fig. 6, we can find that: if $r^*\in(-\infty,4-\lambda_m)$, then $r\in (-\infty,-1)\cup(4-\lambda_{m+1},+\infty)\cup\{\infty\}$, $\tilde{r}\in (-1,4-\tilde{\lambda}_{m+1})$, and then $r^0\notin[\frac{1}{4-\lambda_{m+2}}, 4-\lambda_{m+2}]$, $\tilde{r}^0\in(\frac{1}{4-\tilde{\lambda}_{m+2}}, 4-\tilde{\lambda}_{m+2})$; if $r^*\in(4-\lambda_{m},+\infty)$, then $r\in(-1,4-\lambda_{m+1})$, $\tilde{r}\in (-\infty,-1)\cup(4-\tilde{\lambda}_{m+1},+\infty)\cup\{\infty\}$, and then $r^0\in(\frac{1}{4-\lambda_{m+2}},4-\lambda_{m+2})$, $\tilde{r}^0\notin[\frac{1}{4-\tilde{\lambda}_{m+2}}, 4-\tilde{\lambda}_{m+2}]$; if $r^*=4-\lambda_m$, then $r=4-\lambda_{m+1}$, $\tilde{r}=4-\tilde{\lambda}_{m+1}$, and then $r^0=4-\lambda_{m+2}$, $\tilde{r}^0=4-\tilde{\lambda}_{m+2}$; and if $r^*=\infty$, then $r=\tilde{r}=-1$, and then $r^0=\frac{1}{4-\lambda_{m+2}}$, $\tilde{r}^0=\frac{1}{4-\tilde{\lambda}_{m+2}}$. Thus we have verified that the conclusion of the lemma is true for $\tau=0$. 

As for $r^1$ and $\tilde{r}^1$, we have: if $r^*\in(-\infty,-1)$, then $r^1\in(\frac{1}{4-\lambda_{m+2}},4-\lambda_{m+2})$ and $\tilde{r}^1\notin[\frac{1}{4-\tilde{\lambda}_{m+2}},4-\tilde{\lambda}_{m+2}]$; if $r^*\in(-1,+\infty)$, then $r^1\notin[\frac{1}{4-\lambda_{m+2}},4-\lambda_{m+2}]$ and $\tilde{r}^1\in(\frac{1}{4-\tilde{\lambda}_{m+2}},4-\tilde{\lambda}_{m+2})$; if $r^*=-1$, then $r^1=\frac{1}{4-\lambda_{m+2}}$ and $\tilde{r}^1=\frac{1}{4-\tilde{\lambda}_{m+2}}$; and if $r^*=\infty$, then $r^1=4-\lambda_{m+2}$ and $\tilde{r}^1=4-\tilde{\lambda}_{m+2}$. So the conclusion of the lemma is true for $\tau=1$.

Thus we have proved the lemma under the assumption that $m_0<m$.

 It remain to consider the case that $m_0=m(=m_1-2)$. In this case, $E$ is a $m_0$-edge, $\lambda_{m_0}=2$ or $5$, and $\varepsilon_{m_0+1}=1$, $\tilde{\varepsilon}_{m_0+1}=-1$ since $m_0+1=m_1-1$. We now verify the conclusion according to the value of $\lambda_{m_0}$.

Case 1: $\lambda_{m_0}=2$. It is easy to check that $6-6\lambda_{m_0+1}+\lambda_{m_0+1}^2=4-\lambda_{m_0+1}$, then by (\ref{r}) we have
\begin{equation*}
r=\frac{(4-\lambda_{m_0+1})u(p_1)-(4-\lambda_{m_0+1})u(p_0)-2u(p_2)}{(4-\lambda_{m_0+1})u(p_1)-(4-\lambda_{m_0+1})u(p_0)+2u(p_2)}=\frac{4-\lambda_{m_0+1}-2\theta}{4-\lambda_{m_0+1}+2\theta}
\end{equation*}
where $\theta=\frac{u(p_2)}{u(p_1)-u(p_0)}$ \big(including $\infty$ when $u(p_1)=u(p_0)$\big). Similarly, $\tilde{r}=\frac{4-\tilde{\lambda}_{m_0+1}-2\theta}{4-\tilde{\lambda}_{m_0+1}+2\theta}$. 

Notice that if $\theta=-1$, then $r=\frac{6-\lambda_{m_0+1}}{2-\lambda_{m_0+1}}=4-\lambda_{m_0+1}$ and similarly $\tilde{r}=4-\tilde{\lambda}_{m_0+1}$; and if $\theta=1$, then $r=\frac{2-\lambda_{m_0+1}}{6-\lambda_{m_0+1}}=\frac{1}{4-\lambda_{m_0+1}}$ and similarly $\tilde{r}=\frac{1}{4-\tilde{\lambda}_{m_0+1}}$.

Now we look at $r^0$ and $\tilde{r}^0$. By using Lemma \ref{lemma31} and Lemma \ref{lemma32}, noticing that $\lambda_{m_0+1}=\frac{5+\sqrt{17}}{2}\approx 4.562$, $\tilde{\lambda}_{m_0+1}=\frac{5-\sqrt{17}}{2}\approx0.438$, $\lambda_{m_0+2}=\frac{5-\sqrt{25-4\lambda_{m_0+1}}}{2}\approx1.201$ and $\tilde{\lambda}_{m_0+2}=\frac{5-\sqrt{25-4\tilde{\lambda}_{m_0+1}}}{2}\approx0.089$, it is easy to check that: if $\theta\in(-1,+\infty)$, then $r\in(-\infty,-1)\cup(4-\lambda_{m_0+1},+\infty)\cup\{\infty\}$, $\tilde{r}\in(-1,4-\tilde{\lambda}_{m_0+1})$, and then $r^0\notin[\frac{1}{4-\lambda_{m_0+2}},4-\lambda_{m_0+2}]$, $\tilde{r}^0\in(\frac{1}{4-\tilde{\lambda}_{m_0+2}},4-\tilde{\lambda}_{m_0+2})$; if $\theta\in(-\infty,-1)$, then $r\in(-1,4-\lambda_{m_0+1})$, $\tilde{r}\in(-\infty,-1)\cup(4-\tilde{\lambda}_{m_0+1},+\infty)\cup\{\infty\}$, and then $r^0\in(\frac{1}{4-\lambda_{m_0+2}},4-\lambda_{m_0+2})$, $\tilde{r}^0\notin[\frac{1}{4-\tilde{\lambda}_{m_0+2}},4-\tilde{\lambda}_{m_0+2}]$; if $\theta=-1$, then $r=4-\lambda_{m_0+1}$, $\tilde{r}=4-\tilde{\lambda}_{m_0+1}$, and then $r^0=4-\lambda_{m_0+2}$, $\tilde{r}^0=4-\tilde{\lambda}_{m_0+2}$; and if $\theta=\infty$, then $r=\tilde{r}=-1$, and then $r^0=\frac{1}{4-\lambda_{m_0+2}}$, $\tilde{r}^0=\frac{1}{4-\tilde{\lambda}_{m_0+2}}.$

The argument is similar for $r^1$ and $\tilde{r}^1$ by instead considering $\theta\in(-\infty,1)$, $\theta\in(1,+\infty)$, $\theta=1$ and $\theta=\infty$ separately. Thus the conclusion is true for $\lambda_{m_0}=2$.

Case 2: $\lambda_{m_0}=5$. In this case, $6-6\lambda_{m_0+1}+\lambda_{m_0+1}^2=1-\lambda_{m_0+1}$, then by (\ref{r}), 
\begin{equation*}
r=\frac{(1-\lambda_{m_0+1})u(p_1)-(4-\lambda_{m_0+1})u(p_0)-2u(p_2)}{(4-\lambda_{m_0+1})u(p_1)-(1-\lambda_{m_0+1})u(p_0)+2u(p_2)}
\end{equation*}
and
\begin{equation*}
\tilde{r}=\frac{(\lambda_{m_0+1}-4)u(p_1)-(\lambda_{m_0+1}-1)u(p_0)-2u(p_2)}{(\lambda_{m_0+1}-1)u(p_1)-(\lambda_{m_0+1}-4)u(p_0)+2u(p_2)}=\frac{1}{r}.
\end{equation*}

Now we look at $r^0$ and $\tilde{r}^0$. Noticing that $\lambda_{m_0+1}=\frac{5+\sqrt{5}}{2}\approx3.618$, $\tilde{\lambda}_{m_0+1}=\frac{5-\sqrt{5}}{2}\approx 1.382$, $\lambda_{m_0+2}\approx0.878$ and $\tilde{\lambda}_{m_0+2}\approx0.294$, by Lemma \ref{lemma31}, Lemma \ref{lemma32} and the fact that $4-\lambda_{m_0+1}=\frac{1}{\lambda_{m_0+1}-1}=\frac{1}{4-\tilde{\lambda}_{m_0+1}}$, it is easy to check that: if $r\in(-1,4-\lambda_{m_0+1})$, then $\tilde{r}=\frac{1}{r}\in(-\infty,-1)\cup(4-\tilde{\lambda}_{m_0+1},+\infty)\cup\{\infty\}$, and then $r^0\in(\frac{1}{4-\lambda_{m_0+2}},4-\lambda_{m_0+2})$, $\tilde{r}^0\notin[\frac{1}{4-\tilde{\lambda}_{m_0+2}},4-\tilde{\lambda}_{m_0+2}]$; if $r\in (-\infty,-1)\cup(4-\lambda_{m_0+1},+\infty)\cup\{\infty\}$, then $\tilde{r}\in(-1,4-\tilde{\lambda}_{m_0+1})$, and then $r^0\notin[\frac{1}{4-\lambda_{m_0+2}},4-\lambda_{m_0+2}]$, $\tilde{r}^0\in(\frac{1}{4-\tilde{\lambda}_{m_0+2}}, 4-\tilde{\lambda}_{m_0+2})$; if $r=-1$, then $\tilde{r}=-1$, and then $r^0=\frac{1}{4-\lambda_{m_0+2}}$, $\tilde{r}^0=\frac{1}{4-\tilde{\lambda}_{m_0+2}}$; and if $r=4-\lambda_{m_0+1}$, then $\tilde{r}=4-\tilde{\lambda}_{m_0+1}$, and then $r^0=4-\lambda_{m_0+2}$, $\tilde{r}^0=4-\tilde{\lambda}_{m_0+2}$.

The argument is similar for $r^1$ and $\tilde{r}^1$ by instead considering $r\in(-1,\frac{1}{4-\lambda_{m_0+1}})$, $r\in(-\infty,-1)\cup(\frac{1}{4-\lambda_{m_0+1}},+\infty)\cup\{\infty\}$, $r=-1$ and $r=\frac{1}{4-\lambda_{m_0+1}}$ separately. So the conclusion is true for $\lambda_{m_0}=5$.

Thus we have proved the lemma.
\end{proof}

As in Section 1, from now on, we use $N^E(u)$ and $N^E(\tilde{u})$ to denote the number of local extrema of $u$ and $\tilde{u}$ restricted to $E$, respectively.

\begin{lemma}\label{lemma35}
Assume $m_0\leq m<m_1-1$ ($m_0<m<m_1-1$ if $\lambda_{m_0}=6$).  We have

(1) if $r\neq4-\lambda_{m+1}$ and $\frac{1}{4-\lambda_{m+1}}$, then $N^E(u)+N^E(\tilde{u})=2^{m_1-m-1}$;

(2) if $r=4-\lambda_{m+1}$ or $\frac{1}{4-\lambda_{m+1}}$, then $N^E(u)+N^E(\tilde{u})=2^{m_1-m-1}-1$.
\end{lemma}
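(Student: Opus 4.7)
I would argue by strong induction on $k := m_1 - m - 1 \ge 1$. The base case $k = 1$ (i.e.\ $m = m_1 - 2$) is handled by a direct application of Lemma~\ref{lemma34} at the $(m_1-1)$-subedges of $E$, and the inductive step proceeds by splitting $E = E_0 \cup E_1$ into two $(m+1)$-subedges meeting at the midpoint $p_{01}$ and applying the inductive hypothesis to each. The key preliminary observation is that, since $p_0, p_1 \in V_m$ and $u|_{V_m} = \tilde u|_{V_m}$, one has $r = -1 \iff u(p_0) = u(p_1) \iff \tilde u(p_0) = \tilde u(p_1) \iff \tilde r = -1$, so $p_{01}$ is an extremum of $u|_E$ precisely when it is an extremum of $\tilde u|_E$; its contribution to $N^E(u) + N^E(\tilde u)$ is therefore either $2$ (when $r = -1$) or $0$.

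For the bookkeeping of the subedge contributions, the crucial tool is Lemma~\ref{lemma32} combined with the bijectivity from Lemma~\ref{lemma31}, which yields the equivalences
\[
r^0 \in \bigl\{\tfrac{1}{4-\lambda_{m+2}},\,4-\lambda_{m+2}\bigr\} \iff r \in \{-1,\ 4-\lambda_{m+1}\}
\]
and the mirror statement for $r^1$. In the base case $\lambda_{m+2} = \lambda_{m_1}$, and these equivalences pin down exactly which $E_\tau$ sits in case (2) of Lemma~\ref{lemma34}, thereby producing $N^{E_\tau}(u) + N^{E_\tau}(\tilde u) \in \{0,1\}$ as a function of $r$. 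In the inductive step, the same equivalences (now with $\lambda_{m+2}$ generic) determine whether the inductive hypothesis returns $2^{m_1-m-2}$ or $2^{m_1-m-2} - 1$ for each subedge (valid because $\tilde u|_{V_{m+1}} = u|_{V_{m+1}}$ whenever $m+1 \le m_1-2$). A four-way case split on $r \in \{-1,\ 4-\lambda_{m+1},\ \tfrac{1}{4-\lambda_{m+1}},\ \text{other}\}$, adding the midpoint contribution, then yields in every case the totals $2^{m_1-m-1}$ or $2^{m_1-m-1} - 1$: generic $r$ gives $2^{m_1-m-2}+2^{m_1-m-2}+0$; $r = -1$ gives $(2^{m_1-m-2}-1)+(2^{m_1-m-2}-1)+2$; and $r \in \{4-\lambda_{m+1},\ \tfrac{1}{4-\lambda_{m+1}}\}$ gives $(2^{m_1-m-2}-1) + 2^{m_1-m-2} + 0$.

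The principal obstacle is the converse direction of the midpoint extremum claim: that $r = -1$ actually forces $p_{01}$ to be a local extremum of $u|_E$, since Lemma~\ref{lemma33} only supplies the other direction. In the base case this is transparent because both $u|_{E_0}$ and $u|_{E_1}$ are then strictly monotone by Theorem~\ref{thm2} (they fall in case (2) of Lemma~\ref{lemma34}), and $u(p_0) = u(p_1) \ne u(p_{01})$ forces $p_{01}$ to be a strict extremum. In the inductive step the subedges are no longer monotone, and one must instead use the extension formula (\ref{eq32}) to check that $u(p_{001})$ and $u(p_{011})$ lie strictly between $u(p_0) = u(p_1)$ and $u(p_{01})$, then iterate via Lemma~\ref{lemma32} (which keeps $r^{01^j} = \tfrac{1}{4-\lambda_{m+j+2}}$ and $r^{10^j} = 4-\lambda_{m+j+2}$ at the boundary of Theorem~\ref{thm2} for all $j$), and finally apply Theorem~\ref{thm2} at any $j$ with $m+1+j \ge m_1 - 1$ to obtain strict monotonicity of $u$ on the nested subedges converging to $p_{01}$. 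This forces $u$ on a small neighborhood of $p_{01}$ in $E$ to lie strictly on the $u(p_0)$-side of $u(p_{01})$, making $p_{01}$ a local extremum; the identical argument with $\tilde u$ closes the proof.
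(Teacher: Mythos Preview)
Your inductive argument is correct in its skeleton and yields the right arithmetic, but it takes a genuinely different route from the paper. The paper does not induct: it passes directly to the finest level, examining every $(m_1-1)$-subedge $E_\tau$ with $|\tau|=m_1-m-1$ and invoking Lemma~\ref{lemma34} together with Theorem~\ref{thm2} to see that each such $E_\tau$ contributes exactly one extremum to $N^E(u)+N^E(\tilde u)$ in the generic case, while the exceptional subedges (those with $r^\tau\in\{4-\lambda_{m_1},\tfrac{1}{4-\lambda_{m_1}}\}$) pair off around an intermediate vertex $p_{01}^{\tau'}$ that is a common extremum of $u$ and $\tilde u$, the latter fact being supplied by Corollary~\ref{cor31}. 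This direct count avoids induction at the cost of leaning on Corollary~\ref{cor31}; your approach bypasses Corollary~\ref{cor31} entirely but must rebuild the midpoint-extremum statement by hand at each inductive level.

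One correction to your midpoint argument: the intermediate claim that $u(p_{001})$ and $u(p_{011})$ lie strictly \emph{between} $u(p_0)=u(p_1)$ and $u(p_{01})$ is false in general. When $\varepsilon_{m+2}=+1$ and $\lambda_{m+1}<4$ one has $\lambda_{m+2}>4$, hence $r^0=\tfrac{1}{4-\lambda_{m+2}}<0$, and $u(p_{001})$ overshoots rather than lying between. What you actually need, and what does hold, is the weaker statement that $u(p_1^{10^j})-u(p_{01})$ has constant sign for all $j\ge 0$: since $r^{10^j}=4-\lambda_{m+j+2}$, the recursion $u(p_1^{10^{j+1}})-u(p_{01})=\bigl(5-\lambda_{m+j+2}\bigr)^{-1}\bigl(u(p_1^{10^j})-u(p_{01})\bigr)$ holds with $5-\lambda_{m+j+2}>0$ always. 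Combined with the strict monotonicity on $E_{10^j}$ and $E_{01^j}$ at $j\ge m_1-m-2$ from Theorem~\ref{thm2}, this shows $u$ lies strictly on the $u(p_0)$-side of $u(p_{01})$ in a two-sided neighborhood, giving the local extremum at $p_{01}$ as you intend. With this fix your proof goes through.
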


\begin{proof}
(1) Suppose $r\neq4-\lambda_{m+1}$ and $\frac{1}{4-\lambda_{m+1}}$. Let $\tau$ be any word with length $|\tau|=m_1-m-1$. Then $E_\tau$ is a $(m_1-1)$-edge contained in $E$.
 If $r^\tau\neq 4-\lambda_{m_1}$ and $\frac{1}{4-\lambda_{m_1}}$, then by Lemma \ref{lemma34} and Theorem \ref{thm2}, there must be one of $u|_{E^\tau}$ and $\tilde{u}|_{E^\tau}$ having a local extremum. If $r^\tau=4-\lambda_{m_1}$, then by Lemma \ref{lemma31} and Lemma \ref{lemma32}, $\tau\neq 0^{m_1-m-1}$, and thus we could write $\tau=\tau'10^k$ for some word $\tau'$ and integer $k\geq 0$. Then by Corollary \ref{cor31}, $p_0^\tau$ is an extreme point of $u$ in $E$, and $r^{\tau'01^k}=\frac{1}{4-\lambda_{m_1}}$. Furthermore, by Lemma \ref{lemma34}, we also have $\tilde{r}^\tau=4-\tilde{\lambda}_{m_1}$, $\tilde{r}^{\tau'01^k}=\frac{1}{4-\tilde{\lambda}_{m_1}}$, and $p_{0}^\tau$ is also an extreme point of $\tilde{u}$ in $E$. The situation is similar if $r^\tau=\frac{1}{4-\lambda_{m_1}}$. Sum up the extreme points provided by all words $\tau$ with $|\tau|=m_1-m-1$, we then have $N^E(u)+N^E(\tilde{u})=2^{m_1-m-1}$.
 
 (2) When $r=4-\lambda_{m+1}$ or $\frac{1}{4-\lambda_{m+1}}$, we only need to notice that there exists an exception of $\tau=0^{m_1-m-1}$ when $r=4-\lambda_{m+1}$, or $\tau=1^{m_1-m-1}$ when $r=\frac{1}{4-\lambda_{m+1}}$, which provides no extreme point.  So we have $N^E(u)+N^E(\tilde{u})=2^{m_1-m-1}-1$ in this case.
\end{proof}

Now we come to the proof of Theorem \ref{thm3}.

\begin{proof}[Proof of Theorem \ref{thm3}]

For $n\geq 0$, we write $n$ in  binary form, 
\begin{equation*}
n=\sum_{j=0}^\infty\delta_j^{(n)}2^j, \quad\delta_j^{(n)}=0 \text{ or } 1,
\end{equation*}
 with all but a finite number of $\delta_j^{(n)}$ equal to $0$. Let $\{\varepsilon^{(n)}_{m_0+i}\}_{i\geq 1}$ be a  $\varepsilon$-sequence defined as
 \begin{equation}\label{eql}
   \varepsilon^{(n)}_{m_0+i}=
      \begin{cases}
     \varepsilon_{m_0+i},\quad &\text{if } m_0+i\leq m,\\
(-1)^{1+\delta_{m_0+i-m-1}^{(n)}+\delta_{m_0+i-m}^{(n)}}, \quad &\text{if } m_0+i>m,\\
      \end{cases}
      \end{equation}
 and $\{\lambda^{(n)}_{m_0+i}\}_{i\geq 0}$ be the associated sequence of graph eigenvalues initialed from $\lambda_{m_0}$.  It is easy to check that $\psi_n$ is the associated eigenfunction of $\{\lambda^{(n)}_{m_0+i}\}_{i\geq 0}$ satisfying $\psi_n|_{V_m}=u|_{V_m}$.   In fact, if we write $\lambda^{(n)}=\frac{3}{2}\lim_{i\rightarrow\infty}5^{m_0+i}\lambda_{m_0+i}^{(n)}$, then it is easy to check that $\lambda^{(n)}<\lambda^{(n')}$ whenever $n<n'$. 

  For $n\neq 0$, define $t^{(n)}=\max\{i| \varepsilon_{m_0+i}^{(n)}=1, i\geq 1\}$, then $m_0+t^{(n)}+1$ is the level of fixation of $\psi_n$. Furthermore, from (\ref{eql}), we know that $\delta_{t^{(n)}+m_0-m-1}^{(n)}=1$ and $2^{t^{(n)}+m_0-m-1}\leq n<2^{t^{(n)}+m_0-m}$.  Since $m<m_0+t^{(n)}$, there exists an integer $n'\geq 0$ such that $\psi_{n'}=\tilde{\psi}_n$, and the associated $\varepsilon$-sequence of $\psi_{n'}$ should be
  \begin{equation*}
   \varepsilon^{(n')}_{m_0+i}=
      \begin{cases}
     \varepsilon^{(n)}_{m_0+i},\quad &\text{if } i\neq t^{(n)},\\
-1, \quad &\text{if } i=t^{(n)}.\\
      \end{cases}
      \end{equation*}
      This gives that 
        \begin{equation*}
   \delta_{j}^{(n')}=
      \begin{cases}
1-\delta_j^{(n)},\quad &\text{if } j\leq t^{(n)}+m_0-m-1,\\
=\delta_j^{(n)}=0, \quad &\text{if } j>t^{(n)}+m_0-m-1,\\
      \end{cases}
      \end{equation*}
      and so $n'+n=2^{t^{(n)}+m_0-m}-1$. 
      
      So for any $n\neq 0$, by letting $q\geq 1$ be the unique integer such that $2^{q-1}\leq n< 2^q$, we have $n'=2^q-1-n$. In particular, if $n=1$, then $n'=0$.
      
      On the other hand, for $n\geq 0$, if we denote by $r_n=r^E(\psi_n)$, it is easy to check that $r_n$ equals either $r_0$ or $r_1$ since $\lambda_{m+1}^{(n)}$ equals either $\lambda_{m+1}^{(0)}$ or $\lambda_{m+1}^{(1)}$ and $\psi_n|_{V_m}=u|_{m}$. Then since $\psi_0=\tilde{\psi}_1$ and the level of fixation of $\psi_1$ is $m+2$, by using Lemma \ref{lemma32} and Lemma \ref{lemma34}, we have that for any $n\neq 0$, $r_n=\frac{1}{4-\lambda_{m+1}^{(n)}}$ or $4-\lambda_{m+1}^{(n)}$ if and only if $r_0=\frac{1}{4-\lambda_{m+1}^{(0)}}$ or $4-\lambda_{m+1}^{(0)}$.
      
Thus if $\frac{1}{4-\lambda^{(0)}_{m+1}}<r_0<4-\lambda^{(0)}_{m+1}$, then for any $n\neq 0$, $r_n\neq\frac{1}{4-\lambda_{m+1}^{(n)}}$ and $4-\lambda_{m+1}^{(n)}$. By Theorem \ref{thm2}, we have $N^E(\psi_0)=0$, and  by Lemma \ref{lemma35}, for any $n\neq 0$, $N^E(\psi_n)+N^E(\psi_{2^q-1-n})=2^q$ if $2^{q-1}\leq n<2^q$ for some $q\geq1$. Then by induction, we have $N^E(\psi_n)=2[\frac{n+1}{2}]$.

If $r_0=\frac{1}{4-\lambda^{(0)}_{m+1}}$ or $4-\lambda^{(0)}_{m+1}$, then  for $n\neq 0$, $r_n=\frac{1}{4-\lambda^{(n)}_{m+1}}$ or $4-\lambda^{(n)}_{m+1}$, $N^E(\psi_0)=0$, and $N^E(\psi_n)+N^E(\psi_{2^q-1-n})=2^q-1$ for $n$ satisfying $2^{q-1}\leq n< 2^q$. This gives that $N^E(\psi_n)=n$ by induction.

Similarly,  if $r_0\notin[\frac{1}{4-\lambda^{(0)}_{m+1}}, 4-\lambda^{(0)}_{m+1}]$, then $N^E(\psi_0)=1$ and $N^E(\psi_n)+N^E(\psi_{2^q-1-n})=2^q$ for $n$ satisfying $2^{q-1}\leq n< 2^q$. This gives that $N^E(\psi_n)=2[\frac{n}{2}]+1$.

Thus we have proved the theorem.
\end{proof}

\section{Proof of Theorem \ref{thm4}}

We deal with $\{\psi_n^{(2)}\}_{n\geq 0}$, $\{\psi_n^{(5)}\}_{n\geq 0}$ and $\{\psi_n^{(6)}\}_{n\geq 0}$ respectively. 

\textbf{1. The $\psi_n^{(2)}$ case.}  We use $x_0, x_1, x_2$ to denote the boundary vertices of $\mathcal{SG}$, and $y_0,y_1,y_2$ to denote the vertices in $V_1\setminus V_0$ with $y_i$ opposite $x_i$. See the values of $\psi_n^{(2)}$ at $x_i$, $y_i$ in Fig. 7. If we let $E=\overline{y_1y_0}$ (or $\overline{y_2y_1}$, $\overline{y_0y_2}$ by symmetry), by using (\ref{eq32}), it is easy to check that $\forall n\geq 0$, $\psi_n^{(2)}$  takes constant $1$ on $E$ as $\lambda_{1}^{(n)}=2$ and $\frac{2(4-\lambda_2^{(n)})}{(2-\lambda_2^{(n)})(5-\lambda_2^{(n)})}=1$.

\begin{figure}[h]
\begin{center}
\includegraphics[width=4.5cm]{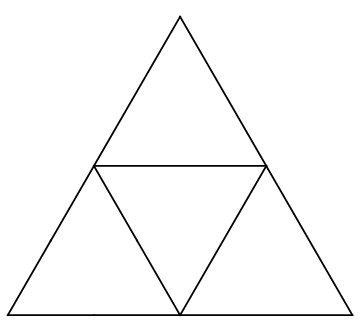}\hspace*{2cm}\includegraphics[width=4.5cm]{eigen25.jpg}
\begin{center}
\begin{picture}(0,0) \thicklines
\put(-162,12){$x_0$}\put(-96,9){$y_2$}\put(-58,74){$y_0$}
\put(-30,12){$x_1$}\put(-96,129){$x_2$}\put(-132,74){$y_1$}
\put(22,12){$0$}\put(90,9){$1$}\put(128,73){$1$}
\put(155,12){$0$}\put(90,129){$0$}\put(53,73){$1$}
\end{picture}
\textbf{\\Figure 7. The family $\psi_n^{(2)}$ with $\lambda_1^{(n)}=2$}
\end{center}\end{center}
\end{figure}

If we let $E=\overline{x_0y_2}$ (or $\overline{x_1y_2}$, etc. by symmetry), then from (\ref{r}) we have
\begin{equation*}
r^E(\psi_0^{(2)})=\frac{6-6\lambda_2^{(0)}+(\lambda_2^{(0)})^2-2}{4-\lambda_2^{(0)}+2}=\frac{1}{4-\lambda_2^{(0)}},\quad\big(\lambda_2^{(0)}=\frac{1}{2}(5-\sqrt{17})\big).
\end{equation*}
Then by Theorem \ref{thm3}, $N^E(\psi_n^{(2)})=n$.
Moreover, by symmetry, $\psi_n^{(2)}(y_2)$ must be an extremum, so the number of extrema of $\psi_n^{(2)}$ on $\overline{x_0x_1}$ is $2n+1$.

\textbf{2. The $\psi_n^{(5)}$ case.} See the values of $\psi_n^{(5)}$ at $x_i$, $y_i$ in Fig. 8. 

\begin{figure}[h]
\begin{center}
\includegraphics[width=4.5cm]{eigen25.jpg}\hspace*{2cm}\includegraphics[width=4.5cm]{eigen25.jpg}
\begin{center}
\begin{picture}(0,0) \thicklines
\put(-162,11){$x_0$}\put(-96,9){$y_2$}\put(-58,75){$y_0$}
\put(-30,11){$x_1$}\put(-96,129){$x_2$}\put(-133,75){$y_1$}
\put(22,12){$0$}\put(90,9){$0$}\put(128,73){$-1$}
\put(155,12){$0$}\put(90,129){$0$}\put(53,73){$1$}
\end{picture}
\textbf{\\Figure 8. The family  $\psi_n^{(5)}$ with $\lambda_1^{(n)}=5$}
\end{center}\end{center}
\end{figure}

For $E=\overline{x_0y_2}$ (or $\overline{x_1y_2}$), by (\ref{r}) we have $r^E(\psi_0^{(5)})=-1<\frac{1}{4-\lambda_2^{(0)}} \ \big(\lambda_2^{(0)}=\frac{1}{2}(5-\sqrt{5})\big)$. From Theorem \ref{thm3}, $N^E(\psi_n^{(5)})=2[\frac{n}{2}]+1$. Moreover, as $\psi_n^{(5)}$ is antisymmetric on $\overline{x_0x_1}$, $\psi_n^{(5)}(y_2)$ is not an extremum. So the total number of extrema of $\psi_n^{(5)}$ on $\overline{x_0x_1}$ is $4[\frac{n}{2}]+2$.

For $E=\overline{x_0y_1}$ (or $\overline{x_1y_0}$), $r^E(\psi_0^{(5)})=\frac{1-\lambda_2^{(0)}}{4-\lambda_2^{(0)}}<\frac{1}{4-\lambda_2^{(0)}}$, $N^E(\psi_n^{(5)})=2[\frac{n}{2}]+1$.

For $E=\overline{y_1x_2}$ (or $\overline{y_0x_2}$), $r^E(\psi_0^{(5)})=\frac{1}{4-\lambda_2^{(0)}}$, $N^E(\psi_n^{(5)})=n$.

Consider the point $y_1$. Like we did in Lemma \ref{lemma2}, we write $\psi_n^{(5)}$  into the summation of its symmetric part $\psi_{n,S}^{(5)}$ and antisymmetric part $\psi_{n,A}^{(5)}$ with respect to the line connecting $y_1$ and $x_1$, that is
\begin{eqnarray*}
&&\psi_{n,S}^{(5)}(x_0)=\psi_{n,S}^{(5)}(x_1)=\psi_{n,S}^{(5)}(x_2)=0\\
&&\psi_{n,S}^{(5)}(y_2)=\psi_{n,S}^{(5)}(y_0)=-\frac{1}{2},\quad
\psi_{n,S}^{(5)}(y_1)=1,\\
&&\psi_{n,A}^{(5)}(x_0)=\psi_{n,A}^{(5)}(x_1)=\psi_{n,A}^{(5)}(x_2)=\psi_{n,A}^{(5)}(y_1)=0,\\
&&\psi_{n,A}^{(5)}(y_2)=-\psi_{n,A}^{(5)}(y_0)=\frac{1}{2}.
\end{eqnarray*}
Then using the same argument, it is easy to check that $\psi_n^{(5)}(y_1)$ is not an extremum for any $n$, and thus the total number of extrema of $\psi_n^{(5)}$ on $\overline{x_0x_2}$ is $2[\frac{n}{2}]+n+1$.

In addition, for $E=\overline{y_2y_1}$ (or $\overline{y_2y_0}$), $r^E(\psi_0^{(5)})=\frac{1-\lambda_2^{(0)}}{4-\lambda_2^{(0)}}<\frac{1}{4-\lambda_2^{(0)}}$, $N^E(\psi_n^{(5)})=2[\frac{n}{2}]+1$. And for $E=\overline{y_1y_0}$, $\frac{1}{4-\lambda_2^{(0)}}<r^E(\psi_0^{(5)})=1<4-\lambda_2^{(0)}$, $N^E(\psi_n^{(5)})=2[\frac{n+1}{2}]$.

\textbf{3. The $\psi_n^{(6)}$ case.} 
To apply Theorem \ref{thm3}, we need to consider smaller edges in $\mathcal{SG}$. See the values of $\psi_n^{(6)}$ on $V_3$ in Fig. 9, noticing that $\lambda_3^{(n)}=3$, $\forall n\geq 0$.  

\begin{figure}[h]
\begin{center}
\includegraphics[width=6.5cm]{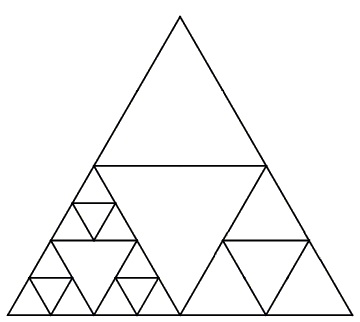}\hspace*{2cm}\includegraphics[width=6.5cm]{eigen6.jpg}
\begin{center}
\begin{picture}(0,0) \thicklines
\put(-219,11){$x_0$}\put(-126,11){$y_2$}\put(-172,11){$z_2$}
\put(-33,11){$x_1$}\put(-126,179){$x_2$}\put(-178,100){$y_1$}
\put(-196,11){$w_2$}\put(-149,11){$w_5$}\put(-200,56){$z_1$}
\put(-80,11){$z_5$}\put(-141,56){$z_0$}\put(-112,56){$z_4$}
\put(-52,56){$z_3$}\put(-74,100){$y_0$}\put(-172,51){$w_8$}
\put(-177,37){$w_0$}\put(-166,42){$w_4$}\put(-212,37){$w_1$}
\put(-132,37){$w_3$}\put(-190,77){$w_7$}\put(-154,77){$w_6$}
\put(24,11){$0$}\put(118,10){$2$}\put(70,11){$-1$}
\put(213,11){$0$}\put(118,177){$0$}\put(68,98){$0$}
\put(45,10){$-\frac12$}\put(95,10){$\frac12$}\put(45,56){$1$}
\put(160,11){$-1$}\put(101,56){$-1$}\put(128,56){$-1$}
\put(192,56){$1$}\put(169,98){$0$}\put(74,49){$0$}
\put(68,37){$0$}\put(73,40){$-1$}\put(33,37){$\frac{1}{2}$}
\put(113,37){$\frac12$}\put(56,77){$\frac12$}\put(89,77){$-\frac12$}

\end{picture}
\textbf{\\Figure 9. The family $\psi_n^{(6)}$ with $\lambda_2=6$}
\end{center}\end{center}
\end{figure}

It is easy to check that $\forall n\geq 0$, $\psi_n^{(6)}$ takes constant $-1$ on $\overline{z_2z_0}$ as $\psi_n^{(6)}(z_2)=\psi_n^{(6)}(z_0)=\psi_n^{(6)}(w_4)=-1$.

For $E=\overline{x_0w_2}$ (or $\overline{z_2w_2}$, $\overline{z_2w_5}$, $\overline{y_2w_5}$, $\overline{z_1w_0}$, $\overline{z_2w_0}$, etc. by symmetry, noticing the direction of the segments),
we have $r^E(\psi_0^{(6)})=4-\lambda_4^{(0)}\  \big(\lambda_4^{(0)}=\frac{1}{2}(5-\sqrt{13})\big)$, and thus $N^E(\psi_n^{(6)})=n$.

On the other hand, noticing that $\psi_n^{(6)}(z_2)<\psi_n^{(6)}(w_5)<\psi_n^{(6)}(y_2)$, $\psi_n^{(6)}(z_2)<\psi_n^{(6)}(w_2)<\psi_n^{(6)}(x_0)$, and $\frac{\psi_n^{(6)}(z_2)-\psi_n^{(6)}(w_5)}{\psi_n^{(6)}(w_5)-\psi_n^{(6)}(y_2)}=\frac{\psi_n^{(6)}(z_2)-\psi_n^{(6)}(w_2)}{\psi_n^{(6)}(w_2)-\psi_n^{(6)}(x_0)}$, we could find that $\psi_n^{(6)}$ attains an extremum at $z_2$, but not at $w_2$ or $w_5$. And from symmetry $\psi_n^{(6)}(y_2)$ must be an extremum. So the total number of extrema of $\psi_n^{(6)}$ on $\overline{x_0x_1}$ is $2(4n+1)+1=8n+3$.

In addition, one could also verify that for $\psi_n^{(6)}$ on $\overline{x_0y_1}$ (or $\overline{y_1y_2}$), the number is $4n+1$; and for $\psi_n^{(6)}$ on $\overline{z_1z_2}$ (or $\overline{z_1z_0}$), the number is $2n$.

Thus we have proved Theorem $\ref{thm4}$.


\begin{thebibliography}{99}




\bibitem{BK} M.T. Barlow and J. Kigami, Localized eigenfunctions of the Laplacian on p.c.f. self-similar sets, \textit{J. London Math. Soc.}, 56 (1997), 320-332.




\bibitem{DRS} J.L. DeGrado, L.G. Rogers, R.S. Strichartz, Gradients of Laplacian eigenfunctions on the Sierpinski gasket, \textit{Proc. Amer. Math. Soc.}, 137:2(2009), 531-540.
 
\bibitem{DSV} K. Dalrymple, R.S. Strichartz and J.P. Vinson, Fractal differential equations on the Sierpinski gasket, \textit{J. Fourier Anal. Appl.},  5(1999), 203-284.

\bibitem{FS} M. Fukushima and T. Shima, On a spectral analysis for the Sierpinski gasket, \textit{Potential Anal.}, 1 (1992),1-35.

\bibitem{G} S. Goldstein, Random walks and diffusions on fractals, in ``Percolation Theory and Ergodic Theory of Infinite Particle Systems'' (H. Kesten, Ed.), \textit{IMA Math. Appl.}, 121-129, Springer-Verlag, New York, 1987.


\bibitem{K1} J. Kigami, A harmonic calculus on the Sierpinski spaces, \textit{Japan J. Appl. Math.}, 6:2(1989), 259-290.

\bibitem{K2} J. Kigami, Harmonic calculus on p.c.f. self-similar sets, \textit{Trans. Amer. Math. Soc.}, 335:2(1993), 721-755.

\bibitem{K3} J. Kigami, Distribution of localized eigenvalues of Laplacian on p.c.f. self-similar sets, \textit{J. Funct. Anal.}, 128 (1998), 170-198. 

\bibitem{K4} J. Kigami, Analysis on fractals, in \textit{Cambridge Tracts in Mathematics}, Vol. 143 (Cambridge University Press, 2001).

\bibitem{KL} J. Kigami and M.L. Lapidus, Weyl's problem for the spectral distribution of Laplacians on p.c.f. self-similar fractals, \textit{Comm. Math. Phys.}, 158 (1993), 93-125.

\bibitem{Ku} S. Kusuoka, A diffusion process on a fractal, in ``Probabilistic Methods in Mathematical Physics, Proceedings of the Taniguchi International Symposium (Katata and Kyoto, 1985)'', Ito, K., Ikeda, N. (eds.), 251-274, Academic Press, Boston, 1987.

\bibitem{RT} R. Rammal and G. Toulouse, Random walks on fractal structures and percolation clusters, \textit{J. Physique Lett.}, 43(1982), 13-22.

\bibitem{Sh} T. Shima, On eigenvalue problems for the random walks on the Sierpinski pre-gaskets, \textit{Japan J. Indust. Appl. Math.}, 8(1991), 127-141.

\bibitem{Sh2} T. Shima, On eigenvalue problems for Laplacians on p.c.f. self-similar sets, \textit{Japan J. Indust. Appl. Math.}, 13(1996), 1-23.


\bibitem{St6} R.S. Strichartz, Differential Equations on Fractals: A Tutorial (Princeton University Press, 2006).

\bibitem{T} A. Teplyaev, Spectral analysis on infinite Sierpinski gaskets, \textit{J. Funct. Anal.}, 159(1998), 537-567.




\end{thebibliography}
\end{document}